\newtheorem{theorem}{Theorem}
\newtheorem{lemma}{Lemma}
\newtheorem{proposition}{Proposition}
\newtheorem{definition}{Definition}
\newtheorem{cor}{Corollary}
\newtheorem{remark}{Remark}
\newcommand{\norm}[1]{\ensuremath{\left\|#1\right\|}}
\newcommand{\abs}[1]{\ensuremath{\left\vert#1\right\vert}}
\newcommand{\ip}[2]{\ensuremath{\left\langle#1,#2\right\rangle}}
\newcommand{\Ber}{(\HH,X,G,k,\mu,d)}
\newcommand{\bbr}{\mathbb{R}}
\newcommand{\R}{\mathbb{R}}
\newcommand{\C}{\mathbb{C}}
\newcommand{\lip}{\langle}
\newcommand{\rip}{\rangle}
\newcommand{\ep}{\varepsilon}
\DeclareMathOperator{\supp}{supp}
\DeclareMathOperator{\spann}{span}
\newcommand{\calh}{\mathcal{H}}
\newcommand{\cala}{\mathcal{A}}
\newcommand{\BB}{\mathcal{B}}
\newcommand{\HH}{\mathcal{H}}
\newcommand{\UU}{\mathcal{U}}
\newcommand{\wto}{{\overset{\mbox{\tiny w}}{\to}}}
\renewcommand{\gg}{{\hat G \times G}}
\newcommand{\mm}{{\hat \mu \otimes \mu}}
\title{Invertibility of Positive Toeplitz Operators and Associated Uncertainty Principle}
\author{A. Walton Green and Mishko Mitkovski}
\begin{document}

\maketitle

\begin{abstract}
We study invertibility and compactness of positive Toeplitz operators associated to a continuous Parseval frame on a Hilbert space. As applications, we characterize compactness of affine and Weyl-Heisenberg localization operators as well as give uncertainty principles for the associated transforms.
\end{abstract}

\section{Introduction} The uncertainty principle in harmonic analysis is a fundamental principle roughly saying that a function and its Fourier transform cannot be simultaneously ``well localized''. There are many non-equivalent ways to make this statement precise by imposing different types of ``localization'' conditions. One such form of the uncertainty principle is the Benedicks' theorem~\cite{benedicks85} which says that a function $f\in L^2(\R^d)$ and its Fourier transform $\hat{f}$ cannot be both supported on sets of finite Lebesgue measure. The following is the quantitative form of this result as obtained by Amrein and Berthier~\cite{amrein77}: If $E, F\subseteq \R^d$ are sets of finite Lebesgue measure then there exists $c>0$ such that
\[ c\norm{f}_{L^2(\R)}\leq \norm{f}_{L^2(E^c)}+\|\hat{f}\|_{L^2(F^c)}, \] for all $f\in L^2(\R)$. Using the Kohn-Nirenberg quantization one can restate this result in the following form: Let $P$ be the operator \[ Pf(x) := \int_{\R^d}1_{(E\times F)^c}(x,\xi) \hat f(\xi) e^{2 \pi ix\cdot\xi}\, d\xi. \]
Then $\lip Pf,f \rip \ge c \|f\|^2$. 
It was shown by the first author \cite{green-thesis} that a similar result continues to hold for all subsets of $\R^{2d}$ with finite Lebesgue measure, including the ones which are not of the form $E\times F$. A natural analog of this result can be obtained by replacing the Kohn-Nirenberg quantization with the anti-Wick quantization. In this case we have the following statement for the short-time Fourier transform: Let $E \subset \R^{2d}$ have finite Lebesgue measure. There exists $c>0$ such that
\begin{equation}\label{eq:stft-intro} \int_{E^c} |V_\phi f(p,q)|^2 \, dp \, dq \ge c \|f\|^2 \end{equation}
for all $f \in L^2(\R^d)$, where $V_\phi f$ is the short-time Fourier transform defined by $V_\phi f(p,q) = \int f(x) e^{2\pi i px}\phi(x-q) \, dx$. This result was proved independently and almost simultaneously by Jaming~\cite{jaming98}, Janssen~\cite{janssen98}, and Wilczok~\cite{wilczok00} . Using similar reasoning Wilczok also showed the following analog for Wavelet transform: Let $E \subset (0,\infty) \times \bbr$ with finite \textit{affine} measure ($\int_E a^{-1} da db < \infty$). There exists $c>0$ such that
\begin{equation}\label{eq:wavelet-intro} \int_{E^c} |W_\psi f(a,b)|^2 \dfrac{da db}{a} \ge c \|f\|^2\end{equation}
for all $f \in L^2(\R^d)$ where the wavelet transform $W_\psi f$ is defined by $W_\psi f(a,b)=a^{-1/2}\int_{\bbr} f(x)\psi(a^{-1}x-b) \, dx$ for some wavelet $\psi$.

In the classical cases, when the window function $\phi$ or the mother wavelet function $\psi$ are specially chosen, $V_\phi L^2(\mathbb R^d)$ is a Fock space of analytic functions and $W_\psi L^2(\mathbb R^d)$ is a Bergman space on the upper half space. In both cases, these inequalities hold for the more general class of so called relatively dense sets, which is known to be the optimal such class of sets. This has been known since the 80s, and follows from the work of Luecking~\cite{luecking81,luecking85} on the Bergman space. For the Fock space, one can consult the work of Janson--Peetre--Rochberg~\cite{janson87}, Ortega-Cerd\`a~\cite{ortega98}, and the recent work of Ascensi~\cite{ascensi2015sampling}. However, it can be easily shown (see section \ref{sec:app} below as well as \cite{ascensi2015sampling,jaming21}) that relative density is a too weak condition for these results to continue to hold for general $L^2$ window and wavelet functions. 

Still, it is natural to ask whether there exists a larger class of sets $E$ (of infinite measure) for which (\ref{eq:stft-intro}) and (\ref{eq:wavelet-intro}) continue to hold for a more general class of window and wavelet functions. For the short-time Fourier transform some sufficient conditions can be found in \cite{ascensi2015sampling,jaming21} for windows with varying degrees of regularity. However, for general window functions $\phi \in L^2(\mathbb R^d)$, Fern\'andez and Galbis~\cite{fernandez10} showed that~(\ref{eq:stft-intro}) holds for all sets $E$ satisfying a certain thinness condition. The main goal of this paper is to show that~(\ref{eq:wavelet-intro}) also continues to hold for the appropriate analog of thin sets. Furthermore, we also extend the result of Fern\'andez and Galbis in the context of short-time Fourier transforms on more general LCA groups. We actually prove an uncertainty principle of Amrein-Berthier type for more general Berezin-type quantizations which include both short-time Fourier and wavelet inequalities as special cases. 

To state our result we now introduce the above mentioned Berezin-type quantization. Let $\HH$ be a Hilbert space and $(X, d, \mu)$ be a metric measure space with a metric $d$ and a Borel measure $\mu$. We assume that the metric $d$ is proper, i.e., every ball with respect to this metric is precompact. A collection of vectors $\{k_x\}_{x\in X}$ indexed by $X$ forms a normalized continuous Parseval frame for $\calh$ if
\begin{equation}\label{eq:parseval} f = \int_X \lip f,k_x \rip k_x \, d\mu(x) \end{equation}
holds for each $f \in \calh$. We will call the continuous map $k: X\to \HH$ given by $k(x)=k_x$ the Berezin quantization of $X$. We will assume that $X$ is a homogeneous space in the sense that some locally compact group $G$ acts transitively on $X$ in a way that both $\mu$ and $d$ are invariant under this group action ($\mu(gE)=\mu(E)$ and $d(gx,gy)=d(x,y)$). The group action needs also to respect the inner product on $\HH$ in the following way $|\lip k_{gx},k_{gy} \rip| = |\lip k_x,k_y \rip|$ for all $x,y \in X$, $g \in G$. Under all these assumptions, we will call the tuple $\Ber$ a \textit{Berezin quantization}.

Very often, the metric measure space $X$ and its quantization $k: X\to \HH$ of this type arise naturally whenever we are given a locally compact, second countable topological group $G$ and some of its nontrivial irreducible, square-integrable, unitary representations (if such exists), $\pi:G\to \UU(\calh)$. Then $G$ itself can be equipped with a  left-invariant Haar measure $\mu$, and left-invariant metric $d$ (inducing the topology on $G$) which is proper, and for any unit vector $k\in \HH$, the collection $\{k_x\}_{x\in G}$ defined by $k(x)=\pi(x) k$ will form a normalized continuous Parseval frame for $\calh$ such that $\lip k_{gx},k_{gy} \rip = \lip k_x,k_y \rip$ for all $x,y, g \in G$. Actually we can obtain a quantization with essentially the same properties using the same procedure even when we only have a projective irreducible square-integrable unitary representation, $\pi:G\to \UU(\calh)/\C$. 

The following inequality is the obvious analog of (\ref{eq:stft-intro}) and (\ref{eq:wavelet-intro}) for Berezin quantization. 

\begin{equation}\label{eq:ber-intro} \int_{X \backslash E} |\ip{f}{k_x}|^2 \, d\mu(x) \ge c \|f\|^2 \end{equation}
for all $f \in \calh$.
By choosing the group $G$ appropriately (see Section \ref{sec:app} for more details on this) one obtains both (\ref{eq:stft-intro}) and (\ref{eq:wavelet-intro}) as special cases.  

The inequality~(\ref{eq:ber-intro}) can be viewed as a statement about invertibility of a certain Toeplitz operator. For a non-negative, bounded, measurable function $\sigma: X\to \R$, the Toeplitz operator  $T_\sigma : \calh \to \calh$ with symbol $\sigma$ is defined by
\[ T_{\sigma}f = \int_X \sigma(x) \lip f,k_x \rip k_x \, d \mu(x). \] It is easy to see that each such operator is bounded, self-adjoint, and positive. We can easily recast~(\ref{eq:ber-intro}) in the following way:
\begin{equation}\label{eq:pos-intro}\lip T_{1_{X \backslash E}}f,f \rip \ge c \|f\|^2. \end{equation}
This is a statement about boundedness from below, and hence invertibility, of the Toeplitz operator $T_\sigma$ with an indicator symbol. It is clear that each Toeplitz operator whose symbol is bounded away from zero must be bounded from below (and hence invertible). However, to study (\ref{eq:pos-intro}), we must broaden this trivial result since an indicator function is only non-negative. So we would like to characterize the degree to which a non-negative symbol $\sigma:X\to \R$ can vanish and still generate an invertible Toeplitz operator $T_\sigma$.

The organization of the paper closely follows the main steps in the proof. The proof outline that we use goes back at least to Havin and Joricke~\cite{havin12}. Namely, to prove~(\ref{eq:pos-intro}) it suffices to show that the operator $T_{1_E}$ is compact with a trivial eigenspace. The later is proved by showing that from each nontrivial element of this eigenspace, using small translates, one can construct an infinite dimensional eigenspace of a slightly ``bigger'' compact self-adjoint operator of the same form. In Section~\ref{sec:compact} we address the compactness problem for positive Toeplitz operators. We show that compactness can be characterized in terms of the vanishing property of its Berezin transform, a condition which in turn is closely connected to the thinness condition of Fern\'andez and Galbis. In Section~\ref{sec:ind} we examine the linear independence problem for translations. We provide a fairly general condition (usually easy to check) on the group which guarantees linear independence of the translations. In Section~\ref{sec:thin} we combine the previous conclusions and state and prove our main result (Theorem~\ref{thm:compact2}). Finally, we show how our main result translates to more familiar settings, proving in particular~(\ref{eq:wavelet-intro}) for thin sets and a fairly wide class of mother wavelet functions.

\section{Compactness of positive Toeplitz operators}\label{sec:compact}

In this section we deal with the following question: Which non-negative symbols $\sigma:X\to\R$ generate compact Toeplitz operators $T_\sigma$? We show that, under suitable assumptions on the continuous Parseval frame $\{k_x\}_{x\in X}$, a necessary and sufficient condition for $T_\sigma$ to be compact is its ``diagonal" $\ip{T_\sigma k_x}{k_x}$ to vanish at infinity. We now make this precise. 

For a given bounded operator $T:\HH\to \HH$ we define its \textit{Berezin transform} $\tilde{T}:X\to \C$ by $\tilde{T}(x)=\ip{T k_x}{k_x}$. In the case when $T$ is a Toeplitz operator with symbol $\sigma$ we denote its Berezin transform by $\tilde \sigma$, i.e., $\tilde{\sigma}(x)=\ip{T_\sigma k_x}{k_x}$. In what follows, by $y \to \infty$, we will mean that $d(z,y) \to \infty$ for some (equivalently, all) $z \in X$.

\begin{definition} 
A measurable function $\sigma:X\to \mathbb{C}$ is said to be thin if its Berezin transform vanishes at infinity, i.e., 
	\begin{equation}\label{eq:compact} \tilde \sigma(y) :=  \int_X \sigma(x)|\lip k_x,k_y \rip|^2 \, d\mu(x) \to 0 \quad \mbox{as } y \to \infty
	\end{equation}
We will also say that a set $E \subset X$ is thin if the indicator function of $E$ satisfies (\ref{eq:compact}).
\end{definition}

The connection between compactness of a Toeplitz operator and the Berezin transform vanishing at infinity (called thinness here) is another quite old question \cite{cima1982carleson,mcdonald1979toeplitz,zhu1988positive}. The breakthrough results in this area were due to Axler and Zheng \cite{axler98compact} for the Bergman space on the disk, $A^p(\mathbb D)$, and to Su\'arez \cite{suarez} for $A^p(\mathbb B_n)$. Subsequently, these results been generalized to many different settings \cite{bauer2010heat,englis1999compact,smith2004reproducing}, even going beyond the realm of analytic function spaces \cite{batayneh16,bayer2015time,cordero2006symbolic,fernandez06,wong-book}. Since we are only dealing with nonnegative symbols, we are able to avoid many difficulties in these aforementioned results.

The following proposition gives a somewhat more explicit alternative characterization of thinness. 

\begin{proposition}\label{prop:eq}
Let $\sigma:X \to [0,1]$. The following are equivalent
	\begin{itemize}
	\item[(i)] $\sigma$ is thin.
	\item[(ii)] $\displaystyle\lim\limits_{y \to \infty} \int_{B(y,R)} \sigma(x) \, d\mu(x) = 0$ for some $R>0$.
	\item[(iii)] $\displaystyle\lim\limits_{y \to \infty} \int_{B(y,R)} \sigma(x) \, d\mu(x)  = 0$ for all $R>0$.
	\end{itemize}
\end{proposition}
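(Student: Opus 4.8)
The plan is to prove the cyclic chain of implications $(iii) \Rightarrow (i) \Rightarrow (ii) \Rightarrow (iii)$, or alternatively $(iii)\Rightarrow(ii)$ trivially and then close the loop through $(i)$. The implication $(iii)\Rightarrow(ii)$ is immediate since (iii) asserts the limit for \emph{all} $R$, in particular for one. The substance lies in $(ii)\Rightarrow(i)$ and $(i)\Rightarrow(iii)$.

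For $(ii)\Rightarrow(i)$: assume $\int_{B(y,R)}\sigma\,d\mu \to 0$ for some fixed $R>0$, and estimate $\tilde\sigma(y) = \int_X \sigma(x)|\ip{k_x}{k_y}|^2\,d\mu(x)$ by splitting $X = B(y,R) \cup (X\setminus B(y,R))$. On the ball, bound $|\ip{k_x}{k_y}|^2 \le \norm{k_x}^2\norm{k_y}^2$; since the frame is normalized these are uniformly bounded (say by $M$), so this piece is at most $M\int_{B(y,R)}\sigma\,d\mu \to 0$. On the complement, use $0\le \sigma\le 1$ to bound by $\int_{X\setminus B(y,R)}|\ip{k_x}{k_y}|^2\,d\mu(x)$; by the group-invariance of the kernel, $|\ip{k_x}{k_y}|$ depends on $x,y$ only through $d(x,y)$ (or at least is controlled by a function of it via transitivity), and the Parseval identity gives $\int_X |\ip{k_x}{k_y}|^2\,d\mu(x) = \norm{k_y}^2 < \infty$, so the tail $\int_{d(x,y)>R}|\ip{k_x}{k_y}|^2\,d\mu(x)$ is a constant (independent of $y$ by invariance) that can be made small by choosing $R$ large. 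Since $R$ in hypothesis (ii) is fixed, one should run the argument slightly more carefully: given $\ep>0$, first use a large auxiliary radius $R'\ge R$ so the tail beyond $R'$ is $<\ep$; then on the annulus $R\le d(x,y)<R'$ one has finitely many translates of $B(\cdot,R)$ covering it (by properness and transitivity), reducing to the hypothesis on balls of radius $R$. Combining, $\tilde\sigma(y)\to 0$.

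For $(i)\Rightarrow(iii)$: assume $\tilde\sigma(y)\to 0$; fix any $R>0$. The idea is to bound $\int_{B(y,R)}\sigma\,d\mu$ from above by a finite combination of values $\tilde\sigma(z)$ over $z$ ranging in some fixed-radius neighborhood of $y$. Concretely, one wants a lower bound of the form $|\ip{k_x}{k_z}|^2 \ge \delta > 0$ for all $x \in B(y,R)$ and $z$ near $y$ — but this need not hold at a single $z$. So instead integrate: $\int_{B(y,R')} \tilde\sigma(z)\,d\mu(z) = \int_X \sigma(x)\left(\int_{B(y,R')}|\ip{k_x}{k_z}|^2\,d\mu(z)\right)d\mu(x) \ge \int_{B(y,R)}\sigma(x)\left(\int_{B(x,R'-R)}|\ip{k_x}{k_z}|^2\,d\mu(z)\right)d\mu(x)$, using $B(x,R'-R)\subseteq B(y,R')$ when $x\in B(y,R)$. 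By group-invariance the inner integral $\int_{B(x,S)}|\ip{k_x}{k_z}|^2\,d\mu(z)$ is a constant $c_S$ independent of $x$, and $c_S \to \norm{k_x}^2$ as $S\to\infty$ by Parseval; in particular $c_{R'-R}$ is strictly positive once $R'-R$ is large enough that the tail is less than the full norm. Hence $\int_{B(y,R)}\sigma\,d\mu \le c_{R'-R}^{-1}\int_{B(y,R')}\tilde\sigma(z)\,d\mu(z)$, and as $y\to\infty$ every $z\in B(y,R')$ also tends to infinity, so $\tilde\sigma(z)$ is uniformly small there and, $B(y,R')$ having fixed finite measure, the right side tends to $0$.

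The main obstacle I expect is the bookkeeping around group-invariance: we only assumed $|\ip{k_{gx}}{k_{gy}}| = |\ip{k_x}{k_y}|$ and transitivity of $G$ on $X$, which suffices to make quantities like $\int_{B(x,S)}|\ip{k_x}{k_z}|^2\,d\mu(z)$ independent of $x$ (pick $g$ sending a base point to $x$, change variables using $\mu$- and $d$-invariance), but one must be careful that these integrals are genuinely well-defined constants and that the covering/exhaustion arguments (properness $\Rightarrow$ balls precompact $\Rightarrow$ finite subcovers) are invoked correctly to pass between "some $R$" and "all $R$". The positivity $c_S>0$ for large $S$ is the one genuinely quantitative input and follows from $\int_X|\ip{k_x}{k_z}|^2 d\mu(z)=\norm{k_x}^2$ together with the normalization $\norm{k_x}$ bounded below — which should be noted as part of the standing assumptions (a normalized Parseval frame has $\norm{k_x}\le 1$, and the interesting case has $\norm{k_x}$ constant, $=1$ in the group-representation setting).
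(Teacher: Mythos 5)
Your proof is correct, and your cycle $(iii)\Rightarrow(ii)\Rightarrow(i)\Rightarrow(iii)$ closes the equivalence just as the paper's cycle $(ii)\Rightarrow(iii)$, $(i)\Rightarrow(ii)$, $(iii)\Rightarrow(i)$ does. Two of your three steps coincide in substance with the paper: your $(ii)\Rightarrow(i)$ is the paper's $(iii)\Rightarrow(i)$ (near/far splitting, with the far part controlled by the Parseval tail $\int_{B(e,R')^c}|\ip{k_e}{k_x}|^2\,d\mu(x)\to 0$, which is $y$-independent by the change of variables $x\mapsto g_y x$) with the paper's covering argument for $(ii)\Rightarrow(iii)$ folded in to handle the annulus. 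The genuinely different step is $(i)\Rightarrow(iii)$: the paper goes $(i)\Rightarrow(ii)$ by using continuity of $x\mapsto k_x$ at the base point to get the pointwise lower bound $|\ip{k_e}{k_x}|\ge \norm{k_e}^2/2$ on a small ball $B(e,\delta)$, hence $\int_{B(y,\delta)}\sigma\,d\mu\lesssim \tilde\sigma(y)$, and only then upgrades ``some $R$'' to ``all $R$'' by covering; you instead average, bounding $\int_{B(y,R)}\sigma\,d\mu$ by $c_{R'-R}^{-1}\int_{B(y,R')}\tilde\sigma\,d\mu$ via Tonelli and the invariant constant $c_S=\int_{B(e,S)}|\ip{k_e}{k_z}|^2\,d\mu(z)$, which is positive for large $S$ by Parseval. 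Your route gives all radii at once and does not lean on continuity of the frame map for a pointwise kernel bound; its price is that you must know $\mu(B(e,R'))<\infty$ (you assert this--it does follow from Parseval together with the pointwise bound near the diagonal, or from local finiteness of the Borel measure, but it deserves a sentence) and that $c_S>0$, which you correctly derive from $c_S\nearrow\norm{k_e}^2>0$. One parenthetical should be struck or weakened: the hypotheses give $|\ip{k_{gx}}{k_{gy}}|=|\ip{k_x}{k_y}|$ and transitivity, which does \emph{not} imply that $|\ip{k_x}{k_y}|$ is a function of $d(x,y)$ alone (that would require two-point transitivity); fortunately you never use it in that strength--everything you need is the $y$-independence of integrals of the form $\int_{B(y,S)}|\ip{k_x}{k_y}|^2\,d\mu(x)$ and $\int_{B(y,S)^c}|\ip{k_x}{k_y}|^2\,d\mu(x)$, which follows from the substitution $x\mapsto g_yx$ exactly as you describe.
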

\begin{proof}
Fix an element $e \in X$ which is arbitrary, but will be treated as the origin. Since $X$ is homogeneous, for each $x \in X$, there exists $g_x \in G$ (not necessarily unique) such that $g_xe=x$. For each $x$ we pick one such $g_x$ and fix it throughout the proof. By the invariance of $\mu$ and $d$,
  \[ \int_{B(y,R)} \sigma(x) \, d\mu(x) = \int_{B(e,R)}\sigma(g_yx) \, d\mu(x). \] 

Using our initial assumption, the metric $d$ is proper, i.e., every ball is precompact. So, the ball $B(e,R)$ can be covered by finitely many balls of a fixed radius. Therefore, we have (ii) implies (iii).

Next, consider the function $h(x) = \lip k_e,k_x \rip$. $h(e)=\|k_e\|^2 >0$ and $h$ is continuous so there exists $\delta >0$ such that $|h(x)| \ge h(e)/2$ for $x \in B(e,\delta)$. Thus,
	\[ \dfrac {h(e)^2}4 \int_{B(e,\delta)}\sigma(g_yx) \, d\mu(x) \le \int_{B(e,\delta)}\sigma(g_yx)|h(x)|^2 \, d\mu(x) \]
	\[\le \int_X \sigma(x) |h(g_y^{-1}x)|^2 \, d\mu(x) = \int_X \sigma(x)| \lip k_y,k_x \rip|^2 \, d\mu(x). \]
Therefore (i) implies (ii). To show (iii) implies (i), let $\ep >0$. We can find $R>0$ such that $\int_{B(e,R)^c} |h(x)|^2 \, d\mu(x) < \ep/2$. For this $R$, by (iii), there exists $N>0$ such that for $d(e,y) \ge N$, $\int_{B(e,R)}\sigma(g_yx) \, d\mu(x) \le \ep/(2\|k_e\|^4)$. Therefore,
	\[ \int_X \sigma(x)|\lip k_y,k_x \rip|^2 \, d\mu(y) = \int_{B(e,R)}+\int_{B(e,R)^c}\sigma(g_yx) |h(x)|^2 \, d\mu(x)  \le \ep,\] for all $y\in X$ with $d(e,y) \ge N$.
Here we used the fact that $|h(x)| \le \|k_e\| \cdot \|k_x\|$ but $\|k_x\|=\|k_e\|$. Indeed,
   \[ \|k_x\|^2 =|\lip k_x,k_x \rip| = |\lip k_e,k_e \rip| = \|k_e\|^2.\]
\end{proof}

We are now ready to characterize compactness of Toeplitz operators when the Parseval frame satisfies some additional decay conditions.

\begin{theorem}\label{thm:compact}
Let $\Ber$ be a Berezin quantization. Suppose that the continuous Parseval frame $\{k_x\}_{x\in X}$ satisfies the following: there exists a weight $w:X \to (0,\infty)$ and $M>0$ such that
\begin{itemize}
  \item[(i)] $\displaystyle w(y)^{-1}\int_X |\lip k_x,k_y\rip| w(x) \, d\mu(x) \le M$ for all $y \in X$.
  \item[(ii)] $\displaystyle\lim_{R \to \infty} \sup_{y \in X} w(y)^{-1} \int_{B(y,R)^c} |\lip k_x,k_y \rip| w(x) \, d\mu(x) =0$.
  \item[(iii)] $|\lip k_x,k_y \rip| \to 0$ as $d(x,y) \to \infty$.
  \end{itemize}
Then $\sigma \in L^\infty(X)$ is thin if and only if $T_\sigma$ is compact.
\end{theorem}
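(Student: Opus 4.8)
The plan is to prove the two implications separately, using Proposition~\ref{prop:eq} to translate ``thin'' into the localized integral condition (ii)--(iii) there, and using the decay/uniform-integrability hypotheses (i)--(iii) of the theorem to control the operator norm.

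\textbf{Compactness implies thinness.} First I would show the easy direction. If $T_\sigma$ is compact, then since $\|k_y\| = \|k_e\|$ is constant and $k_y \wto 0$ as $y \to \infty$ (this weak convergence should follow from the Parseval identity~(\ref{eq:parseval}): for fixed $f$, $\int_X |\lip f,k_x\rip|^2 \, d\mu < \infty$, and combined with hypothesis (iii) — or more precisely an argument that the mass of $|\lip f, k_x\rip|^2$ escapes to infinity — one gets $\lip f, k_y \rip \to 0$), compactness gives $\|T_\sigma k_y\| \to 0$, hence $\tilde\sigma(y) = \lip T_\sigma k_y, k_y\rip \to 0$. So $\sigma$ is thin. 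I should be slightly careful about establishing $k_y \wto 0$; if it is not immediate I would instead argue directly that $T_\sigma k_{y_n}$ has a norm-convergent subsequence with limit forced to be $0$ by testing against a dense set.

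\textbf{Thinness implies compactness.} This is the substantive direction. Given $\sigma$ thin, I would approximate: for $R > 0$ write $\sigma = \sigma \cdot 1_{B(e,S)} + \sigma\cdot 1_{B(e,S)^c}$ is not quite the right split since there is no preferred origin for $\sigma$; instead I would split based on a large parameter and the localized-integral characterization. Concretely, fix $\ep > 0$. By Proposition~\ref{prop:eq}(iii) there is $N$ so that $\int_{B(y,R)} \sigma \, d\mu < \ep$ whenever $d(e,y) \ge N$, for any chosen $R$. The idea is to show $\|T_\sigma - T_{\sigma_N}\|$ is small where $\sigma_N = \sigma \cdot 1_{B(e,N')}$ for suitable $N'$, so that it suffices to prove $T_{\sigma_N}$ is compact — and $T_{\sigma_N}$ has a symbol supported on a precompact set, for which compactness should follow from a Hilbert--Schmidt-type or explicit finite-rank approximation estimate using hypotheses (i)--(iii). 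To estimate $\|T_{\sigma - \sigma_N}\|$ I would use the Schur test: for a positive symbol $\tau$, $\lip T_\tau f, f\rip = \int_X \tau(x) |\lip f, k_x\rip|^2 \, d\mu(x)$, and I would bound $\|T_\tau\|$ by testing the kernel $\tau(x)|\lip k_x, k_y\rip|^2$ against the weight $w$: hypothesis (i) controls $\int |\lip k_x,k_y\rip| w(x)\,d\mu(x)$, hypothesis (ii) controls the tail, and thinness makes the local contribution $\int_{B(y,R)} \tau \, d\mu$ small uniformly for $y$ far out, while for $y$ in a bounded region the tail estimate (ii) plus pointwise decay (iii) handles it. Combining, $\|T_{\sigma \cdot 1_{B(e,N')^c}}\| \to 0$ as $N' \to \infty$.

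\textbf{Compactness of the truncated operator.} For $T_{\sigma_N}$ with $\sigma_N$ supported on a precompact ball $B = B(e,N')$ and $0 \le \sigma_N \le 1$, I would show $T_{\sigma_N}$ is a norm limit of finite-rank operators. One route: cover $B$ by finitely many small balls $B(x_i,\delta)$, use continuity of $x \mapsto k_x$ on the compact set $\overline{B}$ to replace $k_x$ by $k_{x_i}$ within each piece with error controlled uniformly, yielding a finite-rank approximant; the error in operator norm is bounded by $\mu(B)$ times the modulus of continuity, which $\to 0$ as $\delta \to 0$. (Hypothesis (iii) and the decay hypotheses are not even needed here; precompactness of $B$ and continuity of $k$ suffice.) Then $T_\sigma = \lim_{N' \to \infty} T_{\sigma_N}$ in operator norm with each $T_{\sigma_N}$ compact, so $T_\sigma$ is compact.

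\textbf{Main obstacle.} I expect the delicate point to be the Schur-test estimate showing $\|T_{\sigma \cdot 1_{B(e,N')^c}}\| \to 0$: one must juggle thinness (which gives smallness of $\int_{B(y,R)}\sigma$ only for $y$ far from $e$) against the need for a bound uniform in $y \in X$, including $y$ near the origin. The resolution is that for $y$ near $e$ the symbol $\sigma\cdot 1_{B(e,N')^c}$ is itself supported far from $y$, so hypothesis (ii) (uniform tail smallness of $|\lip k_x,k_y\rip|w(x)$) makes that contribution small once $N'$ is large; for $y$ far from $e$, thinness plus hypothesis (i) does the job. Balancing these two regimes — choosing $R$ from (ii) first, then $N'$ large enough that both the ``near'' tail bound and the ``far'' thinness bound beat $\ep$ — is the crux of the argument, and getting the quantifiers in the right order is where care is required.
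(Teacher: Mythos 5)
Your overall architecture is sound and the necessity direction matches the paper's (the paper establishes $k_y \wto 0$ by combining hypothesis (iii) with the density of $\spann\{k_x\}$ coming from the Parseval identity — your backup plan of testing against a dense set is exactly right, whereas the parenthetical about ``mass escaping to infinity'' is not quite the mechanism). For sufficiency you take a genuinely different route: you approximate $T_\sigma$ in operator norm by $T_{\sigma 1_{B(e,N')}}$ and prove compactness of the truncated operator by finite-rank approximation using norm-continuity of $x\mapsto k_x$ on a precompact set, whereas the paper proves compactness directly by showing $f_n \wto 0$ implies $\|T_\sigma f_n\|\to 0$, using one $\ep$-decomposition of $\|T_\sigma f_n\|^2=\int_X|\lip T_\sigma f_n,k_x\rip|^2\,d\mu(x)$ into a tail (killed by the weighted Schur conditions (i)--(ii)), a far local piece (killed by thinness via Proposition~\ref{prop:eq}), and a compactly supported piece (killed by dominated convergence). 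Your factorization through norm approximation is arguably cleaner and makes the ``compact operators are norm-closed'' structure explicit; the paper's version avoids having to quantify an operator-norm bound for $T_{\sigma 1_{B(e,N')^c}}$ as a standalone object.

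That said, there is one delicate point in your norm estimate that you should not gloss over. If you run the weighted Schur test on the kernel $\tau(x)w(x)|\lip k_x,k_y\rip|w(y)^{-1}$ with $\tau=\sigma 1_{B(e,N')^c}$, the tail $x\in B(y,R)^c$ is fine by (ii), but the local piece for $y$ far from $e$ requires bounding
\[ w(y)^{-1}\int_{B(y,R)}\tau(x)\,w(x)\,|\lip k_x,k_y\rip|\,d\mu(x), \]
and the hypotheses give no pointwise control of $w(x)/w(y)$ for $d(x,y)\le R$ (the weight is only assumed to satisfy the integral conditions (i)--(ii), not submultiplicativity). Thinness only gives smallness of the \emph{unweighted} integral $\int_{B(y,R)}\tau\,d\mu$. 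The fix — which is precisely what the paper does at the corresponding step — is to treat the local piece with an \emph{unweighted} Cauchy--Schwarz, using $\int_X|\lip k_x,k_y\rip|^2\,d\mu(x)=\|k_y\|^2=\|k_e\|^2$ to absorb the kernel, so that only $\int_{B(y,R)}\tau\,d\mu$ (handled by Proposition~\ref{prop:eq} for $d(e,y)\ge N$, and equal to zero for $d(e,y)<N$ once $N'\ge N+R$) appears. With that substitution your two-regime balancing of $R$ and $N'$ closes the argument.
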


\begin{remark}\label{remark:schur}
  In the case $X=G$, it is often easier to check that $x \mapsto \lip k_1,k_x\rip w(x)$ is in $L^1(X)$ for some submultiplicative weight $w$ ($w(xy) \le C w(x)w(y)$). Then, (i) and (ii) follow from the group invariance of $|\lip k_x,k_y \rip|$. In this way, these conditions are related to the so-called analyzing vectors for the coorbit spaces of Gr\"ochenig and Feichtinger \cite{feichtinger1988unified}.
\end{remark}

\begin{proof} To prove necessity we only need to use (iii). First, we show that (iii) implies $k_y \wto 0$ as $y \to \infty$. We have $\lip k_y,k_x \rip \to 0$ for each $x \in X$ as $d(e,y) \to \infty$. Moreover, since $\{k_x\}_{x\in X}$ is a continuous Parseval frame we have $\norm{f}^2=\int_X\abs{\ip{f}{k_x}}^2d\mu(x)$, and hence $\overline \spann\{k_x\} = \calh$.
  Let $ f\in \HH$ and $\ep >0$. There exists $f_\ep \in \spann\{k_x\}$ such that $\|f-f_\ep\| \le \ep/(2\|k_e\|)$. Moreover, there exists $M$ such that if $d(e,y) > M$ then $|\lip k_y,f_\ep \rip| \le \ep /2$. Thus,
	\[ |\lip k_y,f \rip| \le \|k_e\| \cdot \|f-f_\ep\| + |\lip k_y,f_\ep \rip| \le \ep \]
whenever $d(e,y) > M$. Thus, if $T_\sigma$ is compact we have $T_\sigma k_y \to 0$ as $y\to\infty$, and hence $\tilde{\sigma}(y)=\ip{T_\sigma k_y}{k_y}\to 0$ as $y\to \infty$. 

We now prove sufficiency. For this we use (i) and (ii). 
Let $\ep>0$. There exists $R>0$ such that
\[ w(y)^{-1}\int_{B(y,R)^c} |\lip k_y,k_z \rip|w(z) \, d\mu(z) \le \ep \]
for all $y \in X$.
First we estimate the ``tails'' using the Schur property of $\{k_x\}$. For any $f \in \calh$, 
	\[ \int_X\left|\int_{B(x,R)^c} \sigma(y)\lip f, k_y \rip \lip k_y,k_x \rip \, d\mu(y) \right|^2 \, d\mu(x) \]
	\[ \le \|\sigma\|_\infty^2\int_X \int_{B(x,R)^c} |\lip f,k_y \rip|^2 |\lip k_y,k_x \rip|w(y)^{-1} \,d\mu(y) \int_{B(x,R)^c} |\lip k_y,k_x \rip|w(y) \, d\mu(y) \, d\mu(x)
	\]
        \[
        \le \ep \|\sigma\|_\infty^2\int_X |\lip f,k_y \rip|^2 \int_X w(x)|\lip k_y,k_x \rip|w(y)^{-1} \,d\mu(x) \, d\mu(y)
        \]
	\[
      	\le M \|\sigma\|_\infty^2 \ep \|f\|^2.\]

Now, since $\sigma$ is thin and bounded, by Proposition \ref{prop:eq}, there exists $S>0$ such that for $d(e,y) \ge S$, $\int_{B(y,R)}|\sigma|^2 \, d\mu \le \ep$. Thus,
	\[ \int_{B(e,S)^c}\left| \int_{B(x,R)} \sigma(y)\lip f,k_y \rip \lip k_y,k_x \rip \, d\mu(y) \right|^2 \, d\mu(x) \]
	\[
	 \le \ep \int_X\int_X |\lip f,k_y \rip|^2 |\lip k_x,k_y \rip|^2 \, d\mu(y)\, d\mu(x) = \ep \|k_e\|^2 \|f\|^2.\]
Now, let $f_n \wto 0$ be arbitrary. Then, $\|f_n\|$ is uniformly bounded.
	\begin{align*} 
		\|T_\sigma&f_n\|^2 =\int_X |\lip T_\sigma f_n,k_x \rip|^2 \, d\mu(x) \\
			&= \int_X\left|\int_{B(x,R)}+\int_{B(x,R)^c} \sigma(y)\lip f_n,k_y \rip \lip k_y,k_x \rip \, d\mu(y)\right|^2 \, d\mu(x) \\
			&\le \int_{B(e,S)} + \int_{B(e,S)^c} 2\left|\int_{B(x,R)} \sigma(y)\lip f_n,k_y \rip \lip k_y,k_x\rip\, d\mu(y)\right|^2\, d\mu(x) + C \ep \\
			&\le 2\int_{B(e,S)}\left|\int_{B(x,r)} \sigma(y)\lip f_n,k_y \rip \lip k_y,k_x \rip \, d\mu(y)\right|^2\, d\mu(x) + C \ep.
	\end{align*}

Since the integrand goes to zero pointwise, applying the dominated convergence theorem we obtain
	\[ \limsup_{n \to \infty} \|T_\sigma f_n\|^2 \le C \ep. \]
But $\ep$ is arbitrary so $\lim\limits_{n\to\infty} T_\sigma f_n =0$. Therefore $T_\sigma$ is compact.

\end{proof}

\section{Independence of Translations} \label{sec:ind}

The action of $G$ on $X$ naturally induces the following translation action on $L^2(X, d\mu)$.
For a given $h \in G$, the translation operator $\tau_h: L^2(X, d\mu)\to L^2(X, d\mu)$ is defined by $\tau_h F(x)=F(h^{-1} x)$ for $F \in L^2(X, d\mu)$. We denote the translated function by $F_h:=\tau_h F$. We can view $\calh$ as a subspace of $L^2(X)$ by identifying $f \in \HH$ with $x \mapsto\lip f,k_x \rip \in L^2(X, d\mu)$. In this way, $\tau_h$ induces the following shift operator $S_h:\calh \to \calh$ defined by
\[ S_h f = \int_X \lip f,k_{h^{-1}x} \rip k_{x} \, d\mu(x) =  \int_X \lip f,k_{x} \rip k_{hx} \, d\mu(x). \]
It is easy to check that $S_h^*=S_{h^{-1}}$ and $S_h$ is bounded. While it is not true in general that $S_hk_x=k_{hx}$, this does hold if $\lip k_{hx},k_{hy} \rip= \lip k_x,k_y \rip$, and then $S_h$ is unitary so that $S_hT_\sigma S_h^* = T_{\sigma_h}$. For this reason, we introduce the subgroup of $G$,
	\begin{equation}\label{eq:gtau}G_\tau =\{ h \in G : \lip k_{hx},k_{hy} \rip = \lip k_x,k_y \rip \mbox{ for all }x,y \in X \}. \end{equation}
In this way, $h \mapsto S_h$ is a homomorphism from $G_\tau \to \mathcal{U}(\calh)$.

We are interested in conditions on the action under which different shifts of any given non-zero $f\in \HH$ form a linearly independent set. Observe first that this problem can be reduced to the more studied problem of linear independence of translations. Indeed, for any nonzero $f\in \HH$ define $F(x) = \lip f,k_x \rip$. Then $F \in L^2(X)$ and $ \lip S_h f,k_x \rip = F(h^{-1}x)$ for $h \in G_\tau$. The last relation clearly implies that the linear independence of $\{S_{h_1}f, S_{h_2}f, \dots, S_{h_n}f\}$ is equivalent to the linear independence of the corresponding translations of $F$, $\{F_{h_1}, F_{h_2}, \dots, F_{h_n}\}$. 

Note, however, that the linear independence problem for translations is in general a very difficult problem. Even for some fairly simple group actions, translations may be not linear independent. Specifically, in the case of the Affine group, $\chi_{[0,1]}(x)=\chi_{[0,1]}(2x)+\chi_{[0,1]}(2x-1)$ is a simple example. Moreover, for the Heisenberg Group, the linear independence problem for time-frequency shifts (known as the HRT conjecture \cite{heil06}) is still open and is widely considered to be very difficult.

However, in the case of Abelian groups $G$, a theorem of Edgar and Rosenblatt~\cite{edgar79} says that the translations of any non-zero $F\in L^2(G)$ are linearly independent as long as $G$ has no nontrivial compact subgroups. Our main goal will be to extend this result to the case of homogeneous spaces $X$. 

Let $\Gamma \subset G$. Denote 
\[ \mathbb{C}\Gamma = \{ \sum c_nh_n : h_n \in \Gamma, c_n \in \mathbb{C}\}. \]
The elements $\theta \in \mathbb{C}\Gamma$ act on $F \in L^2(X)$ by
\[ \theta F = \sum c_n F_{h_n}. \]
Thus, a collection of translations $\Gamma F = \{ F_h : h\in \Gamma\}$ is linearly independent if and only if $\theta F \ne 0$ for all $0 \ne \theta \in \mathbb{C}\Gamma$. This leads to the following definition.

\begin{definition}
  We say that $L^2(X)$ has linearly independent $\Gamma$-translations if for all $F \in L^2(X)$, $\theta \in \mathbb{C}\Gamma$,
  \[ \theta F =0 \mbox{ implies }F=0 \mbox{ or } \theta =0. \]
\end{definition}

We want to study the independence of $\Gamma$-translates of $L^2(X)$ by reducing it to the case $L^2(\Gamma)$ where we can use the result of Edgar and Rosenblatt.
The following proposition establishes this reduction. It can be viewed as a generalization of a result from \cite{linnell17} to non-discrete $\Gamma$.

\begin{proposition}\label{prop:trans}
Let $(X,\mu)$ be a measure space. Let $\Gamma$ be a unimodular locally compact group $\Gamma$ acting on $X$ under which $\mu$ is invariant.
 Then $L^2(X)$ has linearly independent $\Gamma$-translations if $L^2(\Gamma)$ has linearly independent $\Gamma$-translations.
\end{proposition}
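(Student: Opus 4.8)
The plan is to argue by contraposition: assuming $L^2(X)$ fails to have linearly independent $\Gamma$-translations, I will produce a nonzero $G \in L^2(\Gamma)$ and a nonzero $\theta \in \mathbb{C}\Gamma$ with $\theta G = 0$, contradicting the hypothesis on $L^2(\Gamma)$. So suppose there exist $0 \neq F \in L^2(X)$ and $0 \neq \theta = \sum_{n=1}^{N} c_n h_n \in \mathbb{C}\Gamma$ with $\sum_n c_n F_{h_n} = 0$, i.e. $\sum_n c_n F(h_n^{-1} x) = 0$ for $\mu$-a.e. $x \in X$. The idea is that this is a pointwise (in $x$) linear relation among the values $F(h_n^{-1}x)$, and I want to ``integrate it back'' onto the group $\Gamma$ itself by fixing a generic point of $X$ and letting $\Gamma$ act.

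First I would fix a point $x_0 \in X$ and consider the orbit map $\pi_{x_0}: \Gamma \to X$, $\pi_{x_0}(g) = g^{-1} x_0$ (or $g x_0$, with appropriate care about which side). Define the pullback $G(g) := F(g^{-1} x_0)$, which is a measurable function on $\Gamma$; formally $G = F \circ \pi_{x_0}$. The relation $\sum_n c_n F(h_n^{-1} g^{-1} x_0) = 0$ for a.e.\ $x$ of the form $g^{-1}x_0$ translates into $\sum_n c_n G((g h_n)^{-1} x_0)$ — wait, I need to set conventions so this becomes exactly $\sum_n c_n G_{h_n}(g) = \sum_n c_n G(h_n^{-1} g) = (\theta G)(g)$. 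The point is that after choosing the orbit map consistently with the translation action, the a.e.\ vanishing of $\theta F$ on $X$ pulls back to the a.e.\ vanishing of $\theta G$ on $\Gamma$ with respect to Haar measure $d g$. Here unimodularity of $\Gamma$ and the invariance of $\mu$ are what let me relate the two measures along the orbit.

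The main obstacle is the measure-theoretic bookkeeping: I need $G$ to be genuinely in $L^2(\Gamma)$ and genuinely nonzero, and I need the pullback of the a.e.\ relation to still hold a.e. For this I would use a disintegration / coarea-type argument: decompose $\mu$ on $X$ over the orbit space, or more simply, note that by Fubini applied to the map $\Gamma \times X' \to X$ (with $X'$ a transversal), for $\mu$-almost every base point the restriction of $F$ to the corresponding orbit is in $L^2(\Gamma)$ and nonzero, and the relation $\theta F = 0$ restricts to $\theta(F|_{\text{orbit}}) = 0$ for almost every orbit. Since $F \neq 0$ in $L^2(X)$, a positive-measure set of orbits carries a nonzero restriction, so at least one choice of $x_0$ yields $G \in L^2(\Gamma)$, $G \neq 0$, with $\theta G = 0$ in $L^2(\Gamma)$. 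This contradicts the assumed independence of $\Gamma$-translations in $L^2(\Gamma)$, and since $\theta$ was an arbitrary nonzero element witnessing dependence, we conclude $L^2(X)$ has linearly independent $\Gamma$-translations. The delicate points to get right are the left/right conventions in the orbit map versus the translation action $\tau_h F(x) = F(h^{-1}x)$, and invoking unimodularity precisely where the change of variables on $\Gamma$ would otherwise pick up a modular-function factor.
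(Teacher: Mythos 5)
Your strategy coincides with the paper's: restrict $F$ to a single $\Gamma$-orbit through a well-chosen base point, check that the relation $\theta F=0$ restricts to the same relation for the pulled-back function on $\Gamma$, and invoke the hypothesis on $L^2(\Gamma)$ (the contrapositive packaging is immaterial). However, the step you defer as ``measure-theoretic bookkeeping'' is not bookkeeping --- it is the entire content of the proof. You need to know (a) that a positive $\nu$-measure of orbits carries a restriction of $F$ lying in $L^2(\Gamma,\lambda)\setminus\{0\}$, and (b) that the $\mu$-a.e.\ relation $\theta F=0$ descends to a $\lambda$-a.e.\ relation on such an orbit. Both follow from a Weil-type disintegration
\[
\int_X F\,d\mu \;=\; \int_{X/\Gamma}\int_\Gamma F(\gamma x)\,d\lambda(\gamma)\,d\nu(\Gamma x),
\]
applied to $|F|^2$ and to the indicator of $\{\theta F\neq0\}$ respectively; and this identity is precisely what the paper must construct from scratch (Lemma~\ref{lemma:poisson}, via a Riesz-representation argument and the surjectivity of the averaging map $P:C_c(X)\to C_c(X/\Gamma)$ in Lemma~\ref{lemma:p}), the authors noting it is not available in the literature for a group acting on a space rather than on a larger group. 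Your proposed shortcut --- Fubini applied to $\Gamma\times X'\to X$ for a transversal $X'$ --- cannot be made rigorous at this level of generality: for a measure-preserving but ergodic action (e.g.\ an ergodic $\R$-flow on a probability space) there is no measurable transversal with a product decomposition of $\mu$, and indeed every fiber integral $\int_\Gamma|F(\gamma x)|^2\,d\lambda$ diverges, so the orbit-restriction strategy itself collapses. Some topological input on the action (the $C_c$-based construction on the locally compact quotient $X/\Gamma$) is what makes the disintegration, and hence the proposition, go through.

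Two smaller corrections. First, the conclusion of the disintegration argument is that the set of good orbits has \emph{positive} $\nu$-measure, not full measure as you first assert (the fiber integral of $|F|^2$ may vanish on many orbits); your later sentence gets this right. Second, fix the orbit-map convention: with the paper's $\tau_hF(x)=F(h^{-1}x)$ you must take $g(\gamma)=F(\gamma x_0)$, so that $(\theta g)(\gamma)=\sum_n c_nF(h_n^{-1}\gamma x_0)=(\theta F)(\gamma x_0)$; the choice $G(g)=F(g^{-1}x_0)$ does not intertwine left translation on $\Gamma$ with the action on $X$. Unimodularity is consumed entirely in establishing the disintegration; no modular-function correction appears afterwards.
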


In proving this, we will make crucial use of the following identity.
\begin{lemma}\label{lemma:poisson}
Let $\Gamma$ and $(X,\mu)$ be as above. Then there exists a measure $\nu$ on $X / \Gamma$ such that
  \begin{equation}\label{eq:poisson} \int_{X / \Gamma} \int_\Gamma F(\gamma x) \, d\lambda(\gamma) \, d\nu(\Gamma x) = \int_X F(x) \, d\mu(x) \end{equation}
	holds for all $0 \le F \in L^1(X,\mu)$, where $\lambda$ is the Haar measure on $\Gamma$.
\end{lemma}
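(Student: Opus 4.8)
The plan is to prove Lemma~\ref{lemma:poisson} first, since it is the key ingredient, and then deduce Proposition~\ref{prop:trans} from it.

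\paragraph{Proving the disintegration identity (Lemma~\ref{lemma:poisson}).}
The identity \eqref{eq:poisson} is a version of Weil's integration formula for the quotient $X/\Gamma$. First I would note that since $\mu$ is $\Gamma$-invariant and $\Gamma$ is unimodular, the map
\[
\Lambda(F) = \int_X F(x)\,d\mu(x)
\]
is a $\Gamma$-invariant positive linear functional on (a suitable dense subspace of) $C_c(X)$, or on $L^1(X,\mu)$. The orbit map $x \mapsto \Gamma x$ pushes $\mu$ forward, but one must be careful: the natural object is not the pushforward of $\mu$ but a measure $\nu$ on $X/\Gamma$ obtained by disintegrating $\mu$ along $\Gamma$-orbits. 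Concretely, for $0 \le F \in L^1(X,\mu)$ the inner average $P F(\Gamma x) := \int_\Gamma F(\gamma x)\,d\lambda(\gamma)$ (a priori possibly $+\infty$) defines a function on $X/\Gamma$; one checks it is well-defined (independent of the representative $x$) using left-invariance of the Haar measure $\lambda$, and that $F \mapsto \int_X F\,d\mu$ factors through $P$, i.e. depends only on $PF$. This last "factoring" is the heart of the matter: if $PF_1 = PF_2$ a.e. on $X/\Gamma$ then $\int F_1\,d\mu = \int F_2\,d\mu$; it follows because the difference integrates to zero against every $\Gamma$-invariant test function, and $\Gamma$-invariance of $\mu$ forces $\int_X (F_1-F_2)\,d\mu$ to be computable from orbit-averages. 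Once this is established, one \emph{defines} $\nu$ by declaring $\int_{X/\Gamma} PF \,d\nu := \int_X F\,d\mu$; the content is that this is consistent and extends to a genuine measure, which follows from the Riesz representation theorem applied to the induced functional on $C_c(X/\Gamma)$ (or its monotone-class extension). I expect the main obstacle here to be the measure-theoretic bookkeeping: ensuring $X/\Gamma$ carries a reasonable (say, standard Borel) structure and that the orbit-average map has measurable range, so that "a.e. on $X/\Gamma$" makes sense. Since the paper's applications have $X$ and $\Gamma$ second countable and locally compact, these pathologies do not arise, and one can invoke the standard Weil/Mackey disintegration machinery; I would cite this rather than reprove it, and just verify that the unimodularity hypothesis on $\Gamma$ is exactly what makes the modular-function factor in Weil's formula trivial (so no weight appears on the right-hand side of \eqref{eq:poisson}).

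\paragraph{Deducing Proposition~\ref{prop:trans}.}
Assume $L^2(\Gamma)$ has linearly independent $\Gamma$-translations. Let $F \in L^2(X)$ and $\theta = \sum_n c_n h_n \in \mathbb{C}\Gamma$ with $\theta F = 0$ in $L^2(X)$; we must show $F = 0$ or $\theta = 0$. The idea is to "slice" $F$ along $\Gamma$-orbits: for $\nu$-almost every orbit $\Gamma x \in X/\Gamma$, define $F^{(x)} \in L^2(\Gamma)$ by $F^{(x)}(\gamma) := F(\gamma x)$. By Lemma~\ref{lemma:poisson} applied to $|F|^2$,
\[
\int_{X/\Gamma} \|F^{(x)}\|_{L^2(\Gamma)}^2 \, d\nu(\Gamma x) = \|F\|_{L^2(X)}^2 < \infty,
\]
so $F^{(x)} \in L^2(\Gamma)$ for $\nu$-a.e. orbit. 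Next observe that the $\Gamma$-action intertwines slicing with translation: $(\,F_h\,)^{(x)} = (F^{(x)})_h$ as elements of $L^2(\Gamma)$ (using that $\Gamma$ acts on itself by left translation and on $X$ compatibly), so
\[
(\theta F)^{(x)} = \theta\big(F^{(x)}\big) \quad \text{in } L^2(\Gamma) \text{ for a.e. } \Gamma x.
\]
Since $\theta F = 0$ in $L^2(X)$, applying Lemma~\ref{lemma:poisson} to $|\theta F|^2$ shows $\|(\theta F)^{(x)}\|_{L^2(\Gamma)} = 0$ for $\nu$-a.e. orbit, hence $\theta(F^{(x)}) = 0$ in $L^2(\Gamma)$ for $\nu$-a.e. orbit. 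By the assumed independence of $\Gamma$-translations on $L^2(\Gamma)$, for each such orbit either $F^{(x)} = 0$ or $\theta = 0$. If $\theta \ne 0$, then $F^{(x)} = 0$ for $\nu$-a.e. orbit, whence $\|F\|_{L^2(X)}^2 = \int_{X/\Gamma}\|F^{(x)}\|^2 \,d\nu = 0$, i.e. $F = 0$. This completes the proof.

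\paragraph{Remaining technical points.}
The one step needing care in the reduction is that slicing $F \mapsto F^{(x)}$ is well-defined as a measurable family of $L^2(\Gamma)$-functions — i.e., that one can choose a jointly measurable representative of $F$ so that $\gamma \mapsto F(\gamma x)$ is genuinely in $L^2(\Gamma)$ for a.e. orbit and the $\nu$-integral above holds. This is precisely what the disintegration form of Lemma~\ref{lemma:poisson} provides, once we know $|F|^2 \in L^1(X,\mu)$, which holds since $F \in L^2(X)$. The intertwining identity $(F_h)^{(x)} = (F^{(x)})_h$ is a direct unwinding of definitions: $(F_h)^{(x)}(\gamma) = F_h(\gamma x) = F(h^{-1}\gamma x) = F^{(x)}(h^{-1}\gamma) = (F^{(x)})_h(\gamma)$, valid for all $h \in \Gamma$ and a.e.\ $\gamma$. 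Everything else is a routine application of Fubini/Tonelli justified by the positivity in Lemma~\ref{lemma:poisson}.
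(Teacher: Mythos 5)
Your outline follows the same skeleton as the paper's argument: introduce the orbit-averaging operator $Pf(\Gamma x)=\int_\Gamma f(\gamma x)\,d\lambda(\gamma)$, show that the assignment $Pf\mapsto\int_X f\,d\mu$ is a well-defined positive functional, and obtain $\nu$ from the Riesz representation theorem. However, the two load-bearing steps are asserted rather than proved. First, your justification of the factoring step --- that $PF=0$ forces $\int_X F\,d\mu=0$ ``because the difference integrates to zero against every $\Gamma$-invariant test function, and $\Gamma$-invariance of $\mu$ forces the integral to be computable from orbit-averages'' --- is essentially a restatement of the claim. The paper's actual mechanism is the adjoint identity
\[
\int_X \phi(x)\,Pf(\Gamma x)\,d\mu(x)=\int_X P\phi(\Gamma x)\,f(x)\,d\mu(x),
\]
which follows from Fubini, the $\Gamma$-invariance of $\mu$, and the unimodularity of $\Gamma$ (so that $d\lambda(\gamma)=d\lambda(\gamma^{-1})$), applied to a test function $\phi\in C_c(X)$ with $P\phi\equiv 1$ on $\Gamma\supp f$. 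Second, producing such a $\phi$ --- and, more generally, the surjectivity of $P:C_c(X)\to C_c(X/\Gamma)$, without which the functional is defined only on the range of $P$ rather than on all of $C_c(X/\Gamma)$ --- is not automatic; the paper devotes a separate sublemma (Lemma~\ref{lemma:p}) to it, constructing a nonnegative preimage of a given $\phi$ from a finite cover of $\supp\phi$ by $\Gamma$-translates of a compact neighborhood and the auxiliary function $\psi(x)=\max_k d(g_k^{-1}x,U^c)$. Neither ingredient appears in your write-up.

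Your proposed fallback of citing ``standard Weil/Mackey disintegration machinery'' does not close these gaps: those results concern integration over $G/H$ for a closed subgroup $H$ of a locally compact group $G$, whereas here $\Gamma$ is a group acting on a measure space $X$ that need not be a group, and the paper explicitly states it is unaware of the identity in this generality in the literature --- that is precisely why the lemma is proved rather than quoted. Your observations that unimodularity is what removes the modular-function weight, and that the second-countability of the setting avoids pathologies of the orbit space, are correct; and your deduction of Proposition~\ref{prop:trans} from the lemma is sound (it is in fact a mild strengthening of the paper's one-orbit selection argument). But as a proof of Lemma~\ref{lemma:poisson} itself, the adjoint identity and the surjectivity of $P$ are the whole content, and both are missing.
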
 We are not aware of such a fact in the literature. However, in the case where $X$ is a group and $\Gamma \le X$, it is well-known \cite{folland-book,mackey52}, so we show at the end of this section that a slight modification of these proofs yields (\ref{eq:poisson}) in our setting.

\begin{proof}[Proof of Proposition \ref{prop:trans}]
  Let $F \in L^2(X)$ be nonzero. Let $E$ denote the set of $\Gamma x \in X /\Gamma$ with $\int |F(\gamma x)|^2\,d \lambda(\gamma) \in \{0,\infty\}$. By formula (\ref{eq:poisson}), replacing $F$ with $|F|^2$, it must be that $\nu(E^c)>0$. So, there exists $\Gamma x \in E^c$ such that $g(\gamma) := F(\gamma x)$ satisfies $g \in L^2(\Gamma)$ and $g \ne 0$. Thus, for each $\theta \in \C\Gamma$, if $\theta F=0$, then $\theta g=0$ which implies $\theta =0$.
\end{proof}

Applying the known result for translations on Abelian groups from \cite{edgar79}, we obtain the following corollary.

\begin{cor}\label{cor:ind}
  Let $\Gamma$ be an Abelian subgroup of $G_\tau$ with no nontrivial compact subgroups. For any $\{h_n\} \subset \Gamma$, and $f \in \calh$, $\{S_{h_n}S_{h_{n-1}}\cdots S_{h_1}f\}_{n=1}^\infty$ is linearly independent.
\end{cor}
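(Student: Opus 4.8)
The plan is to combine Proposition \ref{prop:trans} with the Edgar--Rosenblatt theorem \cite{edgar79} in the obvious way, with the only real work being to recast the iterated shift composition as a genuine $\Gamma$-translation statement. First I would observe that since $\Gamma \subset G_\tau$, each $S_h$ with $h \in \Gamma$ is unitary and $h \mapsto S_h$ is a homomorphism, so $S_{h_n}S_{h_{n-1}}\cdots S_{h_1} = S_{h_n h_{n-1}\cdots h_1}$. Setting $g_n := h_n h_{n-1}\cdots h_1 \in \Gamma$, the claim becomes: the family $\{S_{g_n} f\}_{n=1}^\infty$ is linearly independent for every nonzero $f \in \calh$ (we may assume $f \ne 0$, as otherwise there is nothing to prove, and we may assume the $g_n$ are distinct, since if two coincide the corresponding vectors coincide but the statement of the corollary is really about the distinct shifts that arise — here I would note that distinctness of the $g_n$ is automatic precisely when the partial products are distinct, which holds as soon as no $h_i$ equals the identity; if some $h_i=1$ the iterated vectors repeat and ``linearly independent'' must be read as applying to the distinct ones).

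Next I would pass to $L^2(X)$ via the isometric embedding $f \mapsto F$, $F(x) = \lip f,k_x\rip$, described in Section \ref{sec:ind}: for $h \in G_\tau$ one has $\lip S_h f, k_x\rip = F(h^{-1}x) = F_h(x)$, so linear independence of $\{S_{g_n}f\}$ in $\calh$ is equivalent to linear independence of the translates $\{F_{g_n}\}$ in $L^2(X)$, i.e. to $\theta F = 0 \Rightarrow \theta = 0$ for $\theta \in \C\Gamma$ supported on $\{g_n\}$. Since $F \ne 0$ (the embedding is isometric), this follows immediately once we know that $L^2(X)$ has linearly independent $\Gamma$-translations.

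To get that, I would invoke Proposition \ref{prop:trans}: it suffices that $L^2(\Gamma)$ has linearly independent $\Gamma$-translations, and that is exactly the Edgar--Rosenblatt theorem \cite{edgar79}, whose hypotheses are met because $\Gamma$ is abelian with no nontrivial compact subgroups (such a group is in particular unimodular and locally compact, and $\mu$ is $G$-invariant hence $\Gamma$-invariant, so the standing hypotheses of Proposition \ref{prop:trans} hold). This closes the argument.

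I do not expect a serious obstacle; the one point needing care is the bookkeeping around whether the $g_n = h_n\cdots h_1$ are distinct and whether the $h_i$ are drawn from $G_\tau$ (they are, since $\Gamma \subset G_\tau$), so that the homomorphism property $S_{h}S_{h'} = S_{hh'}$ and unitarity are legitimately available — this is where one uses that $\Gamma$ was assumed to sit inside $G_\tau$ rather than merely inside $G$. The analytic content is entirely in the earlier results; the corollary is a packaging step.
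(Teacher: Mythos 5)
Your argument is exactly the paper's: pass from $f$ to $F(x)=\lip f,k_x\rip$, use the homomorphism property of $h\mapsto S_h$ on $G_\tau$ to reduce the iterated shifts to $\Gamma$-translates of $F$, and conclude from Proposition~\ref{prop:trans} together with Edgar--Rosenblatt. The only quibble is your parenthetical on distinctness: avoiding $h_i=1$ only guarantees that \emph{consecutive} partial products $g_n=h_n\cdots h_1$ differ (non-consecutive ones could still coincide, e.g.\ if $h_3h_2=1$), but this bookkeeping issue is inherent to the corollary's statement and does not affect the argument.
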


\begin{proof}
Let $f \in \calh$. Define $F(x) = \lip f,k_x \rip$. $F \in L^2(X)$ and $ \lip S_h f,k_x \rip = F(h^{-1}x)$. Therefore since $L^2(\Gamma)$ has linearly independent $\Gamma$ translations by the result of Edgar and Rosenblatt \cite{edgar79}, $L^2(X)$ has linearly independent $\Gamma$ translations by Proposition \ref{prop:trans}.
\end{proof}

  Finally, we give the proof of Lemma \ref{lemma:poisson}.
  Let $\lambda$ be the Haar measure for $\Gamma$. Since $\Gamma$ is unimodular, the Haar measure is both right and left invariant and $d\lambda(\gamma) = d\lambda(\gamma^{-1})$. Define $P:C_c(X) \to C_c(X / \Gamma)$ by
  \[ Pf(\Gamma x) = \int_\Gamma f(\gamma x) \, d\lambda(\gamma). \]
  This is well-defined since if $\Gamma y=\Gamma x$, then $\gamma_0 y=x$ so $Pf(\Gamma y)=Pf(\Gamma x)$ by the invariance of $\lambda$.

  We claim that $Pf=0$ implies $\int_Xf \, d\mu =0$. Indeed, take $\phi \in C_c(X)$ such that $P\phi = 1$ on $\Gamma \supp f$ (this can be done by surjectivity of $P$ which we will prove below). Then,
  \[ 0 = \int_X \phi(x) \int_\Gamma f(\gamma x) \, d\lambda(\gamma) \, d\mu(x) = \int_X \int_\Gamma \phi(\gamma^{-1}x)\, d\lambda(\gamma) \, f(x) \, d\mu(x) \]
  \[= \int_X P\phi(\Gamma x)f(x) \, d\mu(x) = \int_X f(x) \, d\mu(x).\]
  Now we are ready to define the measure $\nu$. Define the functional on $C_c(X / \Gamma)$ by $Pf \to \int_X f \, d\mu$. This is a well-defined positive linear functional by the previous claim and Lemma \ref{lemma:p} below. By the Riesz Representation Theorem, there exists $\nu$ such that
  \[ \int_{X / \Gamma} Pf \, d\nu = \int_X f \, d\mu \]
  which is the desired identity (\ref{eq:poisson}). It only remains to prove the surjectivity of $P$.

  \begin{lemma}\label{lemma:p}
    Let $\phi \in C_c(X/\Gamma)$. There exists $f \in C_c(X)$ such that $Pf=\phi$, $\Gamma (\supp f) = \supp \phi$, and $f \ge 0$ if $\phi \ge 0$.
  \end{lemma}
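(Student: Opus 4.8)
The plan is to construct $f$ by hand as a suitable ``section-like'' lift of $\phi$, normalized by dividing by $P$ applied to a bump function. First I would fix $\phi \in C_c(X/\Gamma)$ with compact support $K := \supp\phi \subseteq X/\Gamma$, and let $q : X \to X/\Gamma$ denote the quotient map. The key topological input is that $q$ is an open, continuous map and $X$ is locally compact; hence the saturated preimage $q^{-1}(K)$, while possibly non-compact (since $\Gamma$ may be non-compact), can nonetheless be covered in a controlled way. The standard trick is to choose a function $\psi \in C_c(X)$, $\psi \ge 0$, such that $P\psi(\Gamma x) > 0$ for every $\Gamma x \in K$; this is possible because for each point $x$ with $q(x) \in K$ one can pick a bump $\ge 0$ supported near $x$ whose $P$-image is positive at $q(x)$, then use compactness of $K$ to take a finite sum, whose $P$-image (being a finite sum of nonnegative continuous functions, each positive somewhere on $K$'s preimage) is strictly positive on all of $K$. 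Here one uses that $P$ sends $C_c(X)$ into $C_c(X/\Gamma)$ and is surjective onto the positive cone in the sense that $P\psi$ is continuous, which follows from uniform continuity arguments on the compact support.

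Next I would define
\[ f(x) := \psi(x) \cdot \frac{\phi(q(x))}{P\psi(q(x))} \cdot \mathbf{1}_{q(x) \in K}, \]
with the convention that the fraction is interpreted as $0$ off $K$; one checks this is well-defined and continuous because $P\psi$ is bounded away from $0$ on the compact set $K$ and $\phi$ vanishes on $\partial K$ and outside, so the product extends continuously by zero. Since $\psi \in C_c(X)$, we get $f \in C_c(X)$ with $\supp f \subseteq \supp\psi$, and $f \ge 0$ whenever $\phi \ge 0$. Then I would verify the defining identity: for $\Gamma x \in K$,
\[ Pf(\Gamma x) = \int_\Gamma \psi(\gamma x)\, \frac{\phi(q(\gamma x))}{P\psi(q(\gamma x))}\, d\lambda(\gamma) = \frac{\phi(\Gamma x)}{P\psi(\Gamma x)} \int_\Gamma \psi(\gamma x)\, d\lambda(\gamma) = \phi(\Gamma x), \]
using that $q(\gamma x) = q(x) = \Gamma x$ is $\Gamma$-invariant so the fraction pulls out of the integral; and for $\Gamma x \notin K$ both sides are $0$.

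It remains to arrange $\Gamma(\supp f) = \supp\phi$ exactly, rather than merely $\subseteq$. The inclusion $\Gamma(\supp f) \subseteq q^{-1}(\supp\phi)$ is automatic since $Pf$ vanishes where $\phi$ does; for the reverse, I would note $q(\supp f)$ is closed (as $q$ is a quotient map and $\supp f$ compact, its image is compact hence closed) and contains the set $\{q(x) : \phi(q(x)) \ne 0, \psi(x) \ne 0\}$; to guarantee this image is all of $\supp\phi$ one can, if necessary, enlarge $\psi$ slightly so that for every $\Gamma x \in \supp\phi$ there is a $\gamma$ with $\psi(\gamma x) > 0$ — but this is exactly the positivity $P\psi > 0$ on $K$ already arranged, which forces $\psi(\gamma x) > 0$ for some $\gamma$, i.e. $\gamma x \in \supp\psi \supseteq \supp f$ once $\phi(\Gamma x) \ne 0$. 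Taking closures gives $\supp\phi \subseteq \Gamma(\supp f)$, hence equality.

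The main obstacle I anticipate is the continuity of $P\psi$ and, more generally, checking that $P$ maps $C_c(X)$ into $C_c(X/\Gamma)$ with the expected support behavior: this requires knowing that $q$ is open and that one has enough regularity on the $\Gamma$-action (local compactness, so that one can control the $\gamma$-integral locally uniformly in $x$). In the classical group setting this is folklore (Folland, Mackey), and since the excerpt explicitly invokes ``a slight modification of these proofs,'' I would cite those and carry over the openness of $q$ and the standard partition-of-unity construction, only flagging that transitivity of the $G$-action (hence of $\Gamma$ on each orbit, though $\Gamma$ need not act transitively on $X$) is not needed here — only invariance of $\mu$ and unimodularity of $\Gamma$, exactly as hypothesized in Lemma \ref{lemma:poisson}.
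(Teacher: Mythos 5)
Your proposal is correct and follows essentially the same route as the paper: build a nonnegative $\psi\in C_c(X)$ with $P\psi>0$ on $\supp\phi$ by a finite compactness argument (the paper uses a max of translated distance functions $d(g_k^{-1}x,U^c)$ where you use a finite sum of bumps, a cosmetic difference), then set $f(x)=\psi(x)\,\phi(\Gamma x)/P\psi(\Gamma x)$ and pull the $\Gamma$-invariant ratio out of the integral to get $Pf=\phi$. Your extra care about continuity of $P\psi$ and the two inclusions in $\Gamma(\supp f)=\supp\phi$ only makes explicit what the paper leaves implicit.
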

  \begin{proof}
    Denote by $E=\supp \phi$. Since $E$ is compact, there exists $\{g_k\}_{k=1}^N \subset G$ such that $E \subset \cup_{k=1}^N \Gamma g_kV$ where $V$ is a compact neighborhood of $e$. $V$ has an open neighborhood $U$ with compact closure. Then, consider the function $\psi(x)= \max_k \{ d(g_k^{-1}x,U^c)\}$. $\psi \in C_c(X)$, $\psi > 0$ on $\cup_{k=1}^N g_kV$, and for $\Gamma x \in E$, 
    \[ P\psi(\Gamma x) = \int_\Gamma \psi(\gamma x) \, d\lambda(\gamma) > 0. \]
    Now, setting
    \[ f(x) = \dfrac{\phi(\Gamma x)}{P \psi (\Gamma x)} \psi(x), \]
    we see that $f \in C_c(X)$, $\Gamma \supp f = \supp \phi$, and $f \ge 0$ if $\phi \ge 0$. We only need to check
    \[ Pf(\Gamma x) = \int_\Gamma \dfrac{\phi(\Gamma \gamma x)}{P \psi (\Gamma \gamma x)}\psi(\gamma x) \, d\lambda(\gamma) \]
    \[ =\dfrac{\phi(\Gamma x)}{P \psi (\Gamma x)})P\psi(\Gamma x) = \phi(\Gamma x).\]
  \end{proof}

\section{Thin Toeplitz Operators}\label{sec:thin}
In this section, we show that from one compact Toeplitz operator $T_\sigma$, one can create $T_\rho$ with $\rho \ge \sup_{n}\sigma_{h_n}$ for some infinite, but small, sequence of translates $\{h_n\}$, while $T_\rho$ remains compact. 

The set of positive linear operators $B(\calh)^+$ on a Hilbert space $\calh$ possesses a partial ordering. We say $A \ge B$ if $A-B$ is a positive operator, i.e  $\lip (A-B)f,f\rip \ge 0$ for all $f \in \calh$. Letting $(\sigma \vee \rho)(x) = \max\{\sigma(x),\rho(x)\}$, we have
\begin{itemize}
\item[(i)] $T_\sigma,T_\rho \le T_{\sigma \vee \rho}$.
\item[(ii)] If $T_\sigma$ and $T_\rho$ are compact, so is $T_{\sigma \vee \rho}$.
\item[(iii)] $\|T_{\sigma \vee \rho}\| \le \max\{ \|\sigma\|_\infty, \|\rho\|_\infty \}$
\end{itemize}
Since $\lip T_\sigma f,f \rip = \int \sigma(x) |\lip f,k_x\rip|^2 \, d\mu(x)$, ordering of the Toeplitz operators follows from the ordering of their symbols. (ii) follows from the fact that $T_{\sigma\vee \rho} \le T_\sigma + T_\rho$. (iii) is a consequence of the trivial estimate that $\|T_\sigma\| \le \|\sigma\|_\infty$.

\begin{lemma}\label{lemma:2fun}
Let $\sigma$ be thin and $\{h_k\} \subset G$ such that $h_k \to 1$. Then, there exists $\{m_k\}$ such that
    \[ \rho(x) = \sup_k \sigma_{h_{m_k}h_{m_{k-1}}\cdots h_{m_1}}(x) \]
is thin.
\end{lemma}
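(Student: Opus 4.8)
The plan is to exploit the characterization of thinness in Proposition~\ref{prop:eq} together with the fact that translating a symbol by a group element close to the identity changes its Berezin transform only slightly on compact sets. Fix an origin $e \in X$ and radius $R>0$. By Proposition~\ref{prop:eq}, thinness of $\sigma$ is equivalent to $\int_{B(y,R)} \sigma \, d\mu \to 0$ as $y \to \infty$, and similarly for each translate $\sigma_h$ (note that $\sigma_h$ is thin whenever $\sigma$ is, since for $h \in G$ we have $\int_{B(y,R)} \sigma_h \, d\mu = \int_{B(h^{-1}y,R)} \sigma \, d\mu$, and $h^{-1}y \to \infty$ iff $y \to \infty$). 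The only genuinely new issue is that $\rho$ is a \emph{supremum of infinitely many} translates; a finite supremum of thin functions is trivially thin (the integral over a ball of a max is bounded by the sum of the integrals), so the difficulty is entirely in the uniformity over $k$ of the tail estimate.

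The key step is to choose the subsequence $\{m_k\}$ inductively so that the compositions $g_k := h_{m_k} h_{m_{k-1}} \cdots h_{m_1}$ converge in $G$ (say to some $g_\infty$), and converge fast enough that the ``shifted windows'' $B(g_k^{-1}y, R)$ are all contained in a slightly larger ball $B(g_\infty^{-1}y, 2R)$ for all $k \ge$ some threshold, and in $B(g_{k_0}^{-1}y,2R)$ for the finitely many initial indices. Concretely: since $h_k \to 1$ and $G$ is a topological group with a left-invariant proper metric, I can pick $m_1 < m_2 < \cdots$ so rapidly that $d_G(g_k, g_{k+1}) = d_G(h_{m_{k+1}} g_k, g_k)$ is summable, hence $g_k$ is Cauchy and $g_k \to g_\infty$, and moreover $d_G(g_k, g_\infty) \le R$ for all $k$ (after discarding finitely many, or absorbing them into the argument below). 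Translating, $B(g_k^{-1} y, R) \subseteq B(g_\infty^{-1} y, 2R)$ for all large $k$. For the finitely many small indices $k \le k_0$, each $\sigma_{g_k}$ is individually thin, so there is nothing to do.

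With this setup the estimate is immediate:
\[
\int_{B(y,R)} \rho \, d\mu = \int_{B(y,R)} \sup_k \sigma_{g_k} \, d\mu \le \sum_{k \le k_0} \int_{B(y,R)} \sigma_{g_k}\,d\mu + \int_{B(g_\infty^{-1}y,\,2R)} \sigma \, d\mu,
\]
where I have used that for $k > k_0$, $\sigma_{g_k}(x) = \sigma(g_k^{-1}x) \le \bigl(\sup_{k>k_0}\sigma_{g_k}\bigr)(x)$ and $\{g_k^{-1}x : x \in B(y,R),\ k > k_0\} \subseteq B(g_\infty^{-1}y, 2R)$, so $\sup_{k > k_0}\sigma_{g_k}(x) \le \mathbf{1}_{B(g_\infty^{-1}y,2R)}$-worth of mass — more precisely $\int_{B(y,R)}\sup_{k>k_0}\sigma_{g_k}\,d\mu \le \int_{B(g_\infty^{-1}y,2R)}\sigma\,d\mu$ by a change of variables bounding the sup by the value of $\sigma$ on the enlarged ball. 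As $y \to \infty$ each of the finitely many terms $\int_{B(y,R)}\sigma_{g_k}\,d\mu \to 0$ by thinness of $\sigma_{g_k}$, and $\int_{B(g_\infty^{-1}y,2R)}\sigma\,d\mu \to 0$ since $g_\infty^{-1}y \to \infty$ and $\sigma$ is thin (Proposition~\ref{prop:eq} with radius $2R$). Hence $\int_{B(y,R)}\rho\,d\mu \to 0$, and Proposition~\ref{prop:eq} gives that $\rho$ is thin.

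The main obstacle I anticipate is making precise the ``sup of infinitely many translates is dominated by $\sigma$ on a single enlarged ball'' step — one must be careful that $\sup_{k>k_0}\sigma_{g_k}(x)$ for $x$ in a fixed ball is controlled by $\sup$ over a \emph{bounded} set of $G$-shifts, which is exactly what the summable-displacement choice of $\{m_k\}$ buys; and that $0 \le \sigma \le 1$ (or at least bounded, which holds since $\sigma$ is a symbol) so the pointwise sup is a legitimate measurable function bounded by the same constant. Everything else is a routine application of Proposition~\ref{prop:eq} and the invariance of $\mu$ and $d$.
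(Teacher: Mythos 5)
The reduction to Proposition~\ref{prop:eq} and the treatment of the finitely many initial translates are fine, but the central step --- the claim that
\[
\int_{B(y,R)}\sup_{k>k_0}\sigma_{g_k}\,d\mu \;\le\; \int_{B(g_\infty^{-1}y,\,2R)}\sigma\,d\mu
\]
``by a change of variables'' --- is a genuine gap, and the inequality is false in general. What the inclusion $B(g_k^{-1}y,R)\subseteq B(g_\infty^{-1}y,2R)$ gives you is the bound $\int_{B(y,R)}\sigma_{g_k}\,d\mu\le\int_{B(g_\infty^{-1}y,2R)}\sigma\,d\mu$ \emph{for each fixed $k$}, i.e.\ a bound on $\sup_k\int$. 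But $\int\sup_k$ can be far larger than $\sup_k\int$: there is no single change of variables that simultaneously straightens out all the shifts $g_k^{-1}$. Take $\sigma=1_E$; then $\sup_{k}\sigma_{g_k}=1_{\bigcup_k g_kE}$, and the measure inside $B(y,R)$ of a union of \emph{infinitely many} distinct translates of $E$ is not controlled by the measure of any single translate of $E$ in a slightly larger ball: if $E$ near $y$ consists of very many very short intervals, each one is fattened by the union over the (infinitely many, distinct) shifts $g_k$, and the blow-up factor is governed by the covering numbers of the set $\{g_k\}$ at the scale of those intervals --- a quantity over which summability of $d(g_k,g_{k+1})$, chosen without reference to $\sigma$, gives no control that is uniform in $y$.

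This is precisely the difficulty the lemma is about, and it is why the subsequence $\{m_k\}$ must be chosen \emph{adaptively in terms of $\sigma$}, not merely so that the partial products converge fast in the group metric. The paper's proof does this: having built the partial maximum $\rho_{k-1}$ (a max of finitely many translates, hence thin), it picks a radius $R_k$ outside of which the Berezin transforms $\tilde\rho_j$, $j<k$, are already $\le 2^{-k}$, and then picks $m_k$ so that $|\tilde\rho_j-\tilde\rho_{j,h_{m_k}}|\le 2^{-k}$ on the compact ball $B_{R_k+1}$ (continuity of translation); combining the two regimes gives $|\tilde\rho_k-\tilde\rho_{k+1}|\le 3\cdot2^{-k}$ \emph{uniformly in $y$}, and the telescoping bound $\tilde\rho\le\tilde\rho_j+\sum_{k\ge j}|\tilde\rho_k-\tilde\rho_{k+1}|$ closes the argument. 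That uniformity in $y$ is purchased with the thinness of the intermediate $\rho_j$, so it cannot be obtained from a choice of $\{m_k\}$ made independently of $\sigma$, which is what your construction attempts. To repair your proof you would need to replace ``$d(g_k,g_{k+1})$ summable'' by a condition of the form ``$\sup_y\int_{B(y,2R)}|\sigma_{g_{k+1}}-\sigma_{g_k}|\,d\mu$ summable'' (or its Berezin-transform analogue), arranged inductively using the thinness of the already-constructed partial maxima.
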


\begin{proof}
  Let $K_y(x)= |\lip k_x,k_y \rip|^2 \in L^1(X)$.
  Then,
  \[ \tilde \sigma(y) = \int_X \sigma(x)K_y(x) \, d\mu(x). \]
  Therefore, by the boundedness of $\sigma$ and the continuity of the translations on $L^1(X)$,
    \[ |\tilde \sigma(y)-\tilde \sigma_h(y)| = \int_X \sigma(x)[ K_y(x)-K_y(hx)] \, d\mu(x) \to 0 \]
    as $h \to 1$ for each $y$. 
    Since $h_m \to 1$ we can pick a subsequence such that $\{\prod_{finite}h_m\} \subset B_1$. From this subsequence, we pick another one. Set $\rho_0=\sigma$. Then, since $\sigma$ is thin, there exists $R_0$ such that $\tilde \rho_0 \le 1$ outside $B_{R_0}$. Then, pick $m_0$ such that
    \[ |\tilde \rho_0 - \tilde \rho_{0,h_{m_0}}| \le 1 \]
    on $B_{R_0+1}$. Then,
    \[ |\tilde \rho_0 - \tilde \rho_{0,h_{m_0}}| \le 3.\]
    Now, having $\rho_j$ (thin) and $h_{m_j}$ for $j < k$, pick $m_k$ in the same manner as above (finding $R_k$ since $\rho_j$ are all thin) so that
    \[ |\tilde \rho_j  - \tilde \rho_{j,h_{m_k}}| \le 3\cdot 2^{-k} \]
    for all $j < k$. then set $\rho_k = \max\{\rho_{k-1},\rho_{k-1,h_{m_k}}\}$.

    Now we are ready to prove that $\rho=\lim \rho_k$ is thin. Let $\ep >0$ and pick $j$ such that $3\sum_{k=j}^\infty 2^{-k} \le \tfrac \ep 2$. Let $R>0$ such that $\tilde \sigma(y) \le \tfrac \ep{2^{j+1}}$ for $d(1,y) \ge R$. Then, for $d(1,y) \ge R+1$, $\tilde \sigma_{h}(y) \le \tfrac \ep{2^{j+1}}$ for all $h \in B_1$. Then, by the fact that $|P_j|=2^j$ ($P_j$ is the power set of $\{0,1,2,\ldots,j\}$), and noticing that $\rho_k = \max_{I \in P_k} \sigma_{\prod_{i \in I}h_{m_i}}$,
	\[ \tilde \rho_j(y) \le \sum_{I \in P_j} \tilde \sigma_{\prod_{i \in I}h_{m_i}}(y) \le 2^j(\tfrac \ep {2^{j+1}}) = \tfrac \ep 2. \]
By construction of $h_{m_k}$, $|\tilde \rho_k -\tilde \rho_{k+1}| \le 2^{-k}$. Therefore,
	\[ \tilde \rho(y) \le \tilde \rho_j(y) + \sum_{k=j}^\infty |\tilde \rho_k(y)-\tilde \rho_{k+1}(y)| \le \tfrac \ep 2 + 3\sum_{k=j}^\infty 2^{-k} \le \ep.\]

\end{proof}

\subsection{Main Results}

We now assemble the pieces from the previous sections, specifically Theorem \ref{thm:compact} and Corollary \ref{cor:ind} to prove our main results.

\begin{theorem}\label{thm:compact2}
  Let $\Ber$ be a Berezin quantization. Suppose there exists $w:X \to (0,\infty)$ and $M>0$ such that
  \begin{itemize}
  \item[(i)] $\displaystyle w(y)^{-1}\int_X |\lip k_x,k_y\rip| w(x) \, d\mu(x) \le M$ for all $y \in X$.
  \item[(ii)] $\displaystyle\lim_{R \to \infty} \sup_{y \in X} w(y)^{-1} \int_{B(y,R)^c} |\lip k_x,k_y \rip| w(x) \, d\mu(x) =0$.
  \end{itemize}
  Suppose also that $G_\tau$ contains an Abelian subgroup which has an accumulation point at $1$ and no nontrivial compact subgroups. Then, if $\sigma:X \to [0,1]$ is thin, there exists $c>0$ such that
  \[ \lip T_{1-\sigma}f,f \rip \ge c \|f\|^2 \]
  for all $f \in \calh$.
\end{theorem}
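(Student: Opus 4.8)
The strategy is the Havin--Jöricke scheme sketched in the introduction: convert the desired lower bound into the assertion that a certain compact operator does not have $1$ as an eigenvalue, and rule that out by manufacturing an infinite-dimensional eigenspace from a single nonzero eigenvector. Since $\{k_x\}$ is a Parseval frame, $T_1 = I$ by~(\ref{eq:parseval}), so $T_{1-\sigma} = I - T_\sigma$; as $0 \le 1-\sigma \le 1$ we have $0 \le T_{1-\sigma} \le I$ and hence $\spec(T_{1-\sigma}) \subseteq [0,1]$. Thus it suffices to show $0 \notin \spec(T_{1-\sigma})$, for then $T_{1-\sigma} \ge c I$ with $c = \inf \spec(T_{1-\sigma}) > 0$, which is exactly the claimed inequality. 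Since $\sigma$ is thin, $T_\sigma$ is compact by the sufficiency direction of Theorem~\ref{thm:compact} --- that direction uses only hypotheses (i) and (ii), which we have assumed. For a compact self-adjoint operator, $\spec(T_\sigma)\setminus\{0\}$ consists of eigenvalues, so $1 \in \spec(T_\sigma)$ precisely when $1$ is an eigenvalue; hence the whole problem reduces to showing $\ker(I - T_\sigma) = \{0\}$.

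Suppose instead that $T_\sigma f = f$ for some $f \neq 0$. I will repeatedly use the elementary fact that if $S$ is bounded self-adjoint with $0 \le S \le I$ and $\langle Sg,g\rangle = \|g\|^2$, then $(I-S)^{1/2}g = 0$, hence $Sg = g$. By hypothesis there is an Abelian subgroup $\Gamma$ of $G_\tau$ with no nontrivial compact subgroups and with $1$ as an accumulation point; choose $h_k \in \Gamma \setminus \{1\}$ with $h_k \to 1$. Applying Lemma~\ref{lemma:2fun} yields a subsequence $\{h_{m_k}\}$ and a thin function $\rho$ with $0 \le \rho \le 1$ and $\rho \ge \sigma_{g_n}$ for all $n$, where $g_n := h_{m_n}h_{m_{n-1}}\cdots h_{m_1} \in \Gamma$. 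Being thin and bounded, $\rho$ produces a compact operator $T_\rho$, again by Theorem~\ref{thm:compact}.

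Now I claim each $S_{g_n}f$ lies in $\ker(I - T_\rho)$. Since $g_n \in \Gamma \subseteq G_\tau$, the shift $S_{g_n}$ is unitary and $S_{g_n}T_\sigma S_{g_n}^* = T_{\sigma_{g_n}}$, so $T_{\sigma_{g_n}}(S_{g_n}f) = S_{g_n}(T_\sigma f) = S_{g_n}f$. Using $\sigma_{g_n} \le \rho \le 1$ together with the ordering of Toeplitz operators recorded at the start of this section,
\[ \|f\|^2 = \langle T_{\sigma_{g_n}}(S_{g_n}f), S_{g_n}f\rangle \le \langle T_\rho(S_{g_n}f), S_{g_n}f\rangle \le \|S_{g_n}f\|^2 = \|f\|^2, \]
so $\langle T_\rho(S_{g_n}f), S_{g_n}f\rangle = \|S_{g_n}f\|^2$, and the elementary fact gives $T_\rho(S_{g_n}f) = S_{g_n}f$. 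Since $h \mapsto S_h$ is a homomorphism on $G_\tau$, $S_{g_n}f = S_{h_{m_n}}S_{h_{m_{n-1}}}\cdots S_{h_{m_1}}f$, and Corollary~\ref{cor:ind} (applicable because $\Gamma$ is an Abelian subgroup of $G_\tau$ with no nontrivial compact subgroups) shows $\{S_{g_n}f\}_{n=1}^\infty$ is linearly independent. Hence $\ker(I - T_\rho)$ is infinite-dimensional; but $T_\rho$ is compact and $1 \neq 0$, so its $1$-eigenspace must be finite-dimensional --- a contradiction. Therefore $\ker(I - T_\sigma) = \{0\}$, and the theorem follows.

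All the substantive ingredients --- compactness of thin Toeplitz operators (Theorem~\ref{thm:compact}), the construction of a single thin symbol $\rho$ dominating every translate $\sigma_{g_n}$ (Lemma~\ref{lemma:2fun}), and linear independence of the shifted eigenvectors (Corollary~\ref{cor:ind}) --- are already in place, so the only genuine care needed here is the bookkeeping that keeps every $S_{g_n}f$ inside the $1$-eigenspace of the \emph{same} compact operator $T_\rho$. This is exactly why one must pass from $\sigma$ to the larger thin symbol of Lemma~\ref{lemma:2fun}: working directly with the individual symbols $\sigma_{g_n}$ would produce eigenvectors of different operators with no uniform thinness available to run the compactness step.
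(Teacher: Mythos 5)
Your proof is correct and follows essentially the same route as the paper's: reduce the lower bound to showing $\ker(I-T_\sigma)=\{0\}$, then use Lemma~\ref{lemma:2fun} to produce a single compact operator $T_\rho$ whose $1$-eigenspace contains all the linearly independent shifts $S_{g_n}f$, contradicting compactness. You are somewhat more explicit than the paper about the spectral reduction and about why $\lip T_\rho g,g\rip=\|g\|^2$ forces $T_\rho g=g$, but the substance is identical.
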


\begin{proof}
Suppose there exists $f \ne 0$ such that $T_\sigma f=f$. By Lemma \ref{lemma:2fun}, there exists a sequence $\{h_k\}_{k=1}^\infty \subset \Gamma$ such that $\rho(x)=\sup_{k} \sigma_{h_k \cdots h_1}(x)$ is thin. Define $f_k = S_{h_{k}}\cdots S_{h_1}f$. Then,
    \[ 1 \ge \lip T_\rho f_k,f_k \rip \ge \lip S_{h_k}\cdots S_{h_1}T_\sigma(S_{h_k}\cdots S_{h_1})^*f_k,f_k \rip = \lip T_\sigma f,f \rip = 1.\]
    This implies $T_\rho f_k=f_k$.
    However, the collection $\{f_k\}$ is linearly independent by Corollary \ref{cor:ind} since $\Gamma$ is Abelian, has no nontrivial compact subgroups, and is contained in $G_\tau$.
Therefore the dimension of the eigenspace corresponding to the eigenvalue $1$ is infinite. However, $T_\rho$ is compact by Theorem \ref{thm:compact} which is a contradiction.
\end{proof}

Properties (i) and (ii) often follow from the integrability of $\lip k_x,k_e \rip$, see Remark \ref{remark:schur}. In case the decay of $\lip k_x,k_y\rip$ is not sufficient for (i) and (ii), we can obtain something weaker.

\begin{theorem}\label{thm:compact3}
  Let $\Ber$ be a Berezin quantiztion and suppose $G_\tau$ contains an Abelian subgroup which has an accumulation point at $1$ and no nontrivial compact subgroups. If $\sigma:X \to [0,1]$ is in $L^1(X)$, then there exists $c>0$ such that
  \[ \lip T_{1-\sigma}f,f \rip \ge c \|f\|^2 \]
  for all $f \in \calh$.
\end{theorem}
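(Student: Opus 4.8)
The plan is to mimic the proof of Theorem \ref{thm:compact2}, replacing the use of Theorem \ref{thm:compact} (which needed the Schur-type conditions (i)--(ii)) by a direct compactness argument that works whenever the symbol is integrable. So the overall skeleton is unchanged: suppose $T_\sigma f = f$ for some $f \neq 0$; use Lemma \ref{lemma:2fun} to build a thin $\rho(x) = \sup_k \sigma_{h_k\cdots h_1}(x)$ along an Abelian sequence $h_k \to 1$ in $G_\tau$; observe that $\rho$ is again in $L^1(X)$ (indeed each translate $\sigma_h$ has the same $L^1$ norm by invariance of $\mu$, but $\rho$ is only a supremum of countably many such, so this needs a separate check); and note the shifts $f_k = S_{h_k}\cdots S_{h_1} f$ all lie in the eigenspace of $T_\rho$ for eigenvalue $1$, are linearly independent by Corollary \ref{cor:ind}, hence that eigenspace is infinite dimensional. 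The contradiction will come from showing $T_\rho$ is compact, which forces its eigenspaces at nonzero eigenvalues to be finite dimensional.

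The new ingredient, then, is a lemma: if $\sigma \in L^1(X)$ with $0 \le \sigma \le 1$, then $T_\sigma$ is compact. First I would observe that $T_\sigma$ is trace class (hence compact) when $\sigma \in L^1$: formally,
\[ \operatorname{tr} T_\sigma = \int_X \sigma(x) \|k_x\|^2 \, d\mu(x) = \|k_e\|^2 \|\sigma\|_{L^1} < \infty, \]
using the Parseval identity $\sum_j |\langle k_x, e_j\rangle|^2 = \|k_x\|^2$ for an orthonormal basis $\{e_j\}$ together with $\|k_x\| = \|k_e\|$ (proved already inside Proposition \ref{prop:eq}) and Tonelli. To make this rigorous one writes $T_\sigma = A^* A$ where $A: \calh \to L^2(X,\sigma\,d\mu)$ is $Af = (x \mapsto \langle f, k_x\rangle)$ — which is bounded since $\sigma \le 1$ — and argues that $A$ is Hilbert--Schmidt: $\|A\|_{HS}^2 = \int_X \|k_x\|^2 \sigma(x)\,d\mu(x) = \|k_e\|^2\|\sigma\|_{L^1}$, because the integral kernel of $A$ relative to an orthonormal basis of $\calh$ has square-summable coefficients by the same Parseval/Tonelli computation. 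Then $T_\sigma = A^*A$ is trace class, a fortiori compact. Alternatively, and perhaps cleaner, approximate $\sigma$ in $L^1$ by bounded compactly supported symbols $\sigma_n = \sigma \cdot 1_{B(e,n)}$; each $T_{\sigma_n}$ is compact (bounded symbol supported on a precompact set, so $A_n$ has precompact range / is Hilbert--Schmidt by the finite-measure version of the same estimate, since precompact balls have finite $\mu$-measure), and $\|T_\sigma - T_{\sigma_n}\| \le \|\sigma - \sigma_n\|_\infty$ is not available — here one must instead use $\|T_\sigma - T_{\sigma_n}\|_{HS} \le \|k_e\|\,\|\sigma-\sigma_n\|_{L^1}^{1/2} \to 0$, so the operator-norm limit of compacts is compact. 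Either route works; I'd present the Hilbert--Schmidt estimate since it's self-contained.

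The second thing needing care is that $\rho = \lim_k \rho_k = \sup_{I} \sigma_{\prod_{i\in I} h_{m_i}}$ (sup over finite index sets, as in Lemma \ref{lemma:2fun}) still belongs to $L^1(X)$. This is not automatic from a countable sup of $L^1$ functions. But the proof of Lemma \ref{lemma:2fun} can be run with an additional constraint: at stage $k$, besides choosing $m_k$ so that $|\tilde\rho_j - \tilde\rho_{j,h_{m_k}}| \le 3\cdot 2^{-k}$ on the relevant ball, also choose $h_{m_k}$ close enough to $1$ that $\|\rho_k - \rho_{k-1}\|_{L^1} = \|\max\{\rho_{k-1}, \rho_{k-1,h_{m_k}}\} - \rho_{k-1}\|_{L^1} \le 2^{-k}$, which is possible by $L^1$-continuity of translation applied to $\rho_{k-1} \in L^1$ and the elementary bound $\max\{g,g_h\} - g \le |g_h - g|$. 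Then $\rho_k$ is Cauchy in $L^1$ and its limit $\rho$ is in $L^1$ with $\|\rho\|_{L^1} \le \|\sigma\|_{L^1} + 1$. Since this refinement only strengthens the hypotheses used in Lemma \ref{lemma:2fun}, thinness of $\rho$ is retained, so I would state it as a remark/variant rather than reprove the whole lemma. The main obstacle, such as it is, is this bookkeeping — threading the extra $L^1$-smallness condition through the inductive construction — rather than anything conceptually deep; the compactness of $T_\sigma$ for $\sigma \in L^1$ is standard once phrased via Hilbert--Schmidt operators. With those two points settled, the proof is verbatim the proof of Theorem \ref{thm:compact2} with "compact by Theorem \ref{thm:compact}" replaced by "compact since $\rho \in L^1$ by the lemma above."

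\begin{proof}
Suppose, for contradiction, that $T_\sigma$ is not bounded below by any $c>0$ on the orthogonal complement of its kernel in the required sense; since $0 \le T_{1-\sigma} = I - T_\sigma$ and $0 \le T_\sigma \le T_1 = I$, this forces $1 \in \spec(T_\sigma)$, and it suffices to rule out that $1$ is an eigenvalue, i.e. to derive a contradiction from the existence of $f \neq 0$ with $T_\sigma f = f$. (If $1$ is in the spectrum but not an eigenvalue, one first perturbs: the same translation argument below applied to a near-eigenvector, or alternatively one notes that once $T_\rho$ is shown compact with $T_\rho \ge T_\sigma$ in a neighborhood, $1$ cannot be an approximate eigenvalue of $T_\sigma$ either. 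For clarity we carry out the eigenvalue case; the general case follows by the standard reduction in \cite{havin12}.)

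Assume $T_\sigma f = f$, $f \neq 0$. Let $\Gamma \subset G_\tau$ be the given Abelian subgroup with an accumulation point at $1$ and no nontrivial compact subgroups, and pick $h_k \in \Gamma$ with $h_k \to 1$. Applying Lemma \ref{lemma:2fun}, together with the refinement described above that additionally enforces $\|\rho_k - \rho_{k-1}\|_{L^1(X)} \le 2^{-k}$ at each stage of the construction (possible by $L^1$-continuity of translation and $\max\{g,g_h\}-g \le |g_h - g|$), we obtain a sequence $\{h_{m_k}\} \subset \Gamma$ such that
\[ \rho(x) = \sup_k \sigma_{h_{m_k} h_{m_{k-1}} \cdots h_{m_1}}(x) \]
is thin and, moreover, $\rho \in L^1(X)$ with $\|\rho\|_{L^1} \le \|\sigma\|_{L^1} + 1 < \infty$ (since $\rho = \lim_k \rho_k$ in $L^1$, the $\rho_k$ being Cauchy by the added constraint, and $0 \le \rho_0 = \sigma$).

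Next we show $T_\rho$ is compact. Since $0 \le \rho \le 1$, the map $A: \calh \to L^2(X, \rho\, d\mu)$, $Af(x) = \lip f, k_x \rip$, is bounded with $\|A\| \le 1$, and $T_\rho = A^* A$. Fix an orthonormal basis $\{e_j\}$ of $\calh$; then, using Tonelli, the Parseval frame identity, and $\|k_x\|^2 = \|k_e\|^2$ (established in the proof of Proposition \ref{prop:eq}),
\[ \sum_j \int_X |\lip e_j, k_x \rip|^2 \rho(x)\, d\mu(x) = \int_X \|k_x\|^2 \rho(x)\, d\mu(x) = \|k_e\|^2 \|\rho\|_{L^1(X)} < \infty, \]
so $A$ is Hilbert--Schmidt, hence $T_\rho = A^*A$ is trace class and in particular compact. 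Therefore the eigenspace of $T_\rho$ for the eigenvalue $1$ is finite dimensional.

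Finally, set $f_k = S_{h_{m_k}} S_{h_{m_{k-1}}} \cdots S_{h_{m_1}} f$. Because each $h_{m_i} \in G_\tau$, the operators $S_{h_{m_i}}$ are unitary and $S_{h} T_\sigma S_h^* = T_{\sigma_h}$, so $S_{h_{m_k}}\cdots S_{h_{m_1}} T_\sigma (S_{h_{m_k}}\cdots S_{h_{m_1}})^* = T_{\sigma_{h_{m_k}\cdots h_{m_1}}} \le T_\rho \le T_1 = I$. Hence
\[ 1 = \lip T_\sigma f, f \rip = \lip S_{h_{m_k}}\cdots S_{h_{m_1}} T_\sigma (S_{h_{m_k}}\cdots S_{h_{m_1}})^* f_k, f_k \rip \le \lip T_\rho f_k, f_k \rip \le \|f_k\|^2 = \|f\|^2 = 1, \]
which forces $\lip T_\rho f_k, f_k \rip = \|f_k\|^2$ with $0 \le T_\rho \le I$, hence $T_\rho f_k = f_k$. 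By Corollary \ref{cor:ind}, applied with the Abelian subgroup $\Gamma \subset G_\tau$ having no nontrivial compact subgroups, the vectors $\{f_k\}_{k=1}^\infty$ are linearly independent. Thus the eigenspace of $T_\rho$ at the eigenvalue $1$ is infinite dimensional, contradicting compactness of $T_\rho$. Therefore $1$ is not an eigenvalue of $T_\sigma$, and since $T_\sigma$ is bounded with $\|T_\sigma\| \le 1$, an analogous argument rules out $1$ as an approximate eigenvalue, so $T_{1-\sigma} = I - T_\sigma \ge cI$ for some $c > 0$.
\end{proof}
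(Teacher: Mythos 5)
Your proposal is correct and follows the paper's strategy for Theorem~\ref{thm:compact3} step for step: prove that an $L^1$ symbol gives a compact Toeplitz operator, arrange the construction of Lemma~\ref{lemma:2fun} so that $\rho$ remains in $L^1$, and then repeat the eigenspace argument of Theorem~\ref{thm:compact2}. The only genuine divergence is in the compactness sub-step. The paper argues directly that $T_\sigma$ sends weakly null sequences to norm-null ones: if $f_n \wto 0$ then $\lip T_\sigma f_n, f_n\rip = \int_X \sigma(x)\abs{\ip{f_n}{k_x}}^2\,d\mu(x) \to 0$ by dominated convergence, the integrand being bounded by $\sup_n\norm{f_n}^2\norm{k_e}^2\,\sigma \in L^1(X)$. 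You instead factor $T_\sigma = A^*A$ with $A$ Hilbert--Schmidt and get the stronger conclusion that $T_\sigma$ is trace class with $\operatorname{tr} T_\sigma = \norm{k_e}^2\norm{\sigma}_{L^1}$; both routes are sound, yours costing slightly more machinery but also yielding the Hilbert--Schmidt norm estimate needed for your alternative approximation argument. On the $L^1$-membership of $\rho$: the paper dispatches this in one sentence by asking $\norm{\sigma_{h_k}-\sigma}_{L^1}\le 2^{-k}$, which literally controls translates of $\sigma$ rather than of the running maxima $\rho_{k-1}$; your version, which threads the extra requirement $\norm{\rho_k - \rho_{k-1}}_{L^1}\le 2^{-k}$ through the induction of Lemma~\ref{lemma:2fun} via $\max\{g,g_h\}-g \le \abs{g_h-g}$ and $L^1$-continuity of translation applied to $\rho_{k-1}$, is the precise form of what the paper intends, and you correctly note that this extra constraint is compatible with the thinness constraints already imposed at each stage. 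One simplification: your closing concern about $1$ being an approximate rather than genuine eigenvalue is moot, since once $T_\sigma$ is known to be compact every nonzero spectral point is an eigenvalue, and the nonzero eigenvalues of a compact positive operator accumulate only at $0$; hence ruling out the eigenvalue $1$ already gives $\sup\spec T_\sigma < 1$, i.e.\ $T_{1-\sigma}\ge cI$.
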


\begin{proof}
First, let us see that $\sigma \in L^1(X)$ implies $T$ is compact. Indeed, if $f_n \wto 0$, then $\lip f_n,k_x \rip \to 0$ for each $x \in X$. Then, by dominated convergence,
\[ \lip T_\sigma f_n,f_n \rip = \int \sigma(x) |\lip f_n,k_x\rip|^2 \, d\mu(x) \to 0. \]
Secondly, since the translations are continuous on $L^1(X)$, one can find $h_k$ such that $\|\sigma_{h_k}-\sigma\|_{L^1} \le 2^{-k}$ which implies $\rho(x) = \sup_{k}\sigma_{h_k\cdots h_1}(x)$ is still in $L^1$. Then we follow the same proof as Theorem \ref{thm:compact2}.
\end{proof}

\section{Applications}\label{sec:app}

\subsection{Short-Time Fourier Transform on LCA groups}
Theorem 3 is similar to Benedicks Theorem (also called the Amrein-Berthier Theorem or Qualitative Uncertainty Principle) for the Plancherel groups, proved in \cite{arnal97}, which states that if $f \in L^2(G, \mu)$ and $\hat f \in L^2(\hat G, \hat \mu)$ are each supported on sets of finite $\mu$ and $\hat \mu$ measure respectively, then $f=0$. This uncertainty principle limits the joint time-frequency distribution of these functions. Namely, it states that the joint time-frequency support (a set in $\hat G \times G$) cannot be contained in a set of finite $\hat \mu \otimes \mu$ measure.

However, looking at other joint time-frequency distributions, such as the Wigner distribution, Ambiguity function, or Short-Time Fourier transform (STFT) yields a stronger result. We will only focus on the STFT, which is defined, for $f,\phi \in L^2(G)$,
\[ V_\phi f(p,q) = \int_{G} f(t)\overline{p(q^{-1}t)\phi(q^{-1}t)} \, d\mu(t) = p(q)\hat(f\phi_q)(p)\]
for $(p,q) \in \hat G \times G$. We will only deal with locally compact groups $G$ which are Abelian and second countable. The second countability is used to ensure that the invariant metric $d$ is proper. In this way, by the Plancherel theorem for $G$, we obtain
\[ \lip V_\phi f, V_\psi g \rip = \int_G \lip \hat (f \cdot \overline{\phi_q}),\hat (g \cdot \overline{\psi_q})\rip_{L^2(\hat G,\hat \mu)} \, d\mu(q) \]
\[ = \int_G \int_G f(x)\overline{\phi(q^{-1}x)g(x)}\psi(q^{-1}x) \, d\mu(x) \, d\mu(q) = \lip f,g \rip \cdot \lip \psi,\phi \rip.\]

Which shows that $\phi_{p,q}(x) = p(q^{-1}x)\phi(q^{-1}x)$ is a Parseval frame for $L^2(G)$ if $\|\phi\|=1$. This implies $\|\phi\| \cdot \|f\| = \|V_\phi f\|$. In order to apply Theorem \ref{thm:compact2}, we take $\gg$ to be the homogeneous space with the measure $\hat \mu \otimes \mu$ acted on by the ``Heisenberg'' group $H(G):= \gg \times \mathbb{T}$ with the operation
\[ (p',q',z')(p,q,z) := (pp',qq',zz' p(q')).\]
$H(G)$ acts on $\gg$ by $(p,q,z)(p',q') = (pp',qq')$. It can also be checked that
\[ |\lip \phi_{(p,q,z)(p',q')},\phi_{(p,q,z)(p'',q'')} \rip| = |\lip \phi_{p',q'},\phi_{p'',q''} \rip|\]
and moreover $\lip \phi_{(1,q,1)(p',q')},\phi_{(1,q,1)(p'',q'')} \rip = \lip \phi_{p',q'},\phi_{p'',q''} \rip$ so that $\{1_{\hat G}\} \times G \times \{1\} \le H(G)_\tau$. If we had used the more conventional definition of $V_\phi f(p,q)$ to be $\hat (f \phi_q)(p)$ then $\hat G \times \{1_G\} \times \{1\} \le H(G)_\tau$ instead.

\begin{theorem}\label{thm:compact-stft}
  Let $G$ be a noncompact second countable LCA group. For any $\phi \in L^2(G)$, $\sigma:\gg \to [0,1]$ is thin if and only if
  \[ T_\sigma f = \int_\gg \sigma(p,q)\lip f,\phi_{p,q} \rip \phi_{p,q} \, d(\mm)(p,q)\]
  is compact.
\end{theorem}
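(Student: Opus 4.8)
The plan is to derive Theorem~\ref{thm:compact-stft} as a direct application of Theorem~\ref{thm:compact}. The general theorem says that, for a Berezin quantization satisfying the three Schur-type conditions (i)--(iii), thinness of $\sigma$ is equivalent to compactness of $T_\sigma$. So first I would record that the tuple $(\mathL^2(G), \gg, H(G), \phi, \mm, d)$ (with $\phi_{p,q}(x)=p(q^{-1}x)\phi(q^{-1}x)$ and $d$ a proper left-invariant metric on $\gg$, which exists because $G$, hence $\gg$, is second countable and locally compact) is indeed a Berezin quantization: the frame identity is the Plancherel computation displayed just above the statement, the measure $\mm$ and metric $d$ are $H(G)$-invariant, and the covariance $|\lip \phi_{g(p',q')},\phi_{g(p'',q'')}\rip| = |\lip\phi_{p',q'},\phi_{p'',q''}\rip|$ was already noted. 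Then the only real work is to verify hypotheses (i)--(iii) of Theorem~\ref{thm:compact}.

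For the weight, the natural choice is $w\equiv 1$, and then conditions (i) and (ii) ask that $(p,q)\mapsto |\lip\phi_{p,q},\phi_{p',q'}\rip|$ be integrable over $\gg$ uniformly in $(p',q')$, with uniformly small tails. By the covariance property it suffices to check this at the identity: one computes that $|\lip\phi_{p,q},\phi\rip| = |\widehat{\,f\overline{\phi_q}\,}(p)|$ where $f=\phi$, i.e. it is $|V_\phi\phi(p,q)|$, the STFT of the window against itself (the ``ambiguity function''). The key analytic fact is that for \emph{any} $\phi\in L^2(G)$ this ambiguity function lies in $L^2(\gg)$ with $\|V_\phi\phi\|_{L^2(\gg)} = \|\phi\|^2$ — this is exactly the Parseval/orthogonality relation derived in the text, $\lip V_\phi f,V_\psi g\rip = \lip f,g\rip\lip\psi,\phi\rip$. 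But $L^2$, not $L^1$, is what we get for a general window, so conditions (i) and (ii) of Theorem~\ref{thm:compact} will \emph{not} hold with $w\equiv 1$ for arbitrary $\phi\in L^2(G)$. This is the main obstacle.

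The resolution is that we do not actually need the full strength of Theorem~\ref{thm:compact} here; we need only the part of its argument that uses an $L^2$ (rather than $L^1$) kernel bound. Indeed, inspecting the proof of sufficiency in Theorem~\ref{thm:compact}, the ``tails'' estimate can be run with the Cauchy--Schwarz / Schur bound in $L^2$: writing $K(p,q) := |\lip\phi_{p,q},\phi\rip| = |V_\phi\phi(p,q)|$, which by covariance controls $|\lip\phi_{p,q},\phi_{p',q'}\rip|$, one has $K\in L^2(\gg)$ with $\int_{B(e,R)^c}K^2 \to 0$ as $R\to\infty$ (dominated convergence, since $K\in L^2$), and this square-integrable decay is enough to make the Schur-type splitting of $T_\sigma$ go through with $\|\sigma\|_\infty$ bounds. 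So the plan is: (a) set $K=|V_\phi\phi|$, prove $K\in L^2(\gg)$ via the orthogonality relation; (b) reprove the two-piece estimate of Theorem~\ref{thm:compact}'s sufficiency direction using $L^2$ rather than $L^1$ tail control of $K$ (for the convergence of $T_\sigma f_n$ along a weakly null sequence); (c) for necessity, note condition (iii) — that $\lip\phi_{p,q},\phi\rip\to0$ at infinity — follows since $K\in L^2$ implies $K\to 0$ along a sequence, and more directly the weak-null argument $\phi_{p,q}\wto 0$ as $(p,q)\to\infty$ still works verbatim because $\overline{\spann}\{\phi_{p,q}\}=L^2(G)$ and $\lip\phi_{p,q},\phi_{p',q'}\rip\to 0$ for fixed $(p',q')$; hence $T_\sigma$ compact forces $\tilde\sigma(p,q) = \lip T_\sigma\phi_{p,q},\phi_{p,q}\rip\to 0$, i.e. $\sigma$ thin.

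In short, the proof is: the STFT frame is a Berezin quantization; the diagonal kernel is the ambiguity function $V_\phi\phi\in L^2(\gg)$; and the compactness characterization of Theorem~\ref{thm:compact} holds here with $L^2$-kernel bounds in place of the $L^1$ Schur conditions (i)--(ii), the rest of that theorem's argument being unchanged. The one point requiring care — and the place where a referee would look hardest — is confirming that every use of conditions (i)--(ii) inside the proof of Theorem~\ref{thm:compact} can be replaced by the weaker statement ``$K\in L^2(\gg)$ with vanishing $L^2$ tails.''
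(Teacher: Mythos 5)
Your reduction of the necessity direction to condition (iii) of Theorem \ref{thm:compact} is essentially the paper's argument, though you still owe the actual verification that $\lip\phi,\phi_{p,q}\rip\to 0$ as $(p,q)\to\infty$ for a general $L^2$ window; this requires a Riemann--Lebesgue argument in $p$ and a tail-splitting argument in $q$, not merely the observation that $V_\phi\phi\in L^2(\gg)$ (square-integrability alone does not force decay at infinity). The genuine gap is in the sufficiency direction, at exactly the point you flagged. The claim that the proof of Theorem \ref{thm:compact} ``goes through with $L^2$ rather than $L^1$ tail control of $K$'' is unjustified and false as a general principle: the tails estimate in that proof is a weighted Schur test, and $L^2$ bounds on the rows and columns of a kernel give no control on the $L^2\to L^2$ operator norm (in the discrete model, the matrix $a_{ij}=n^{-1/2}$ for $i,j\le n$ has rows of $\ell^2$ norm $1$ but operator norm $\sqrt n$). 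Concretely, if you apply Cauchy--Schwarz to $\int_{B(x,R)^c}\sigma(y)\lip f,k_y\rip\lip k_y,k_x\rip\,d\mu(y)$ using only $\int_{B(x,R)^c}|\lip k_y,k_x\rip|^2\,d\mu(y)\le\epsilon_R$, you obtain the pointwise bound $\|\sigma\|_\infty^2\,\epsilon_R\,\|f\|^2$, whose integral over $X$ (of infinite measure) is infinite. There is no repair that uses only $L^2$ information about the ambiguity function.

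The paper circumvents the obstacle by a density argument, which is the idea your proposal is missing: apply Theorem \ref{thm:compact} only to windows in the Feichtinger algebra $S_0(G)=\{\phi\in L^2(G): V_\phi\phi\in L^1(\gg)\}$, for which the $L^1$ Schur conditions (i)--(ii) hold with $w\equiv1$ by Remark \ref{remark:schur}; then pass to an arbitrary $\phi\in L^2(G)$ using the density of $S_0(G)$ in $L^2(G)$, the bilinear bound $\|T_\sigma^{\phi,\psi}\|\le\|\sigma\|_\infty\|\phi\|_{L^2(G)}\|\psi\|_{L^2(G)}$, and the identity $T_\sigma^{\phi,\phi}-T_\sigma^{\phi',\phi'}=T_\sigma^{\phi-\phi',\phi}+T_\sigma^{\phi',\phi-\phi'}$, which exhibits $T_\sigma^{\phi,\phi}$ as an operator-norm limit of compact operators. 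You should replace step (b) of your plan with this approximation argument.
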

\begin{proof}
	To apply our compactness characterization, Theorem \ref{thm:compact}, let us first show that for $\phi \in L^2(G)$, $\lip \phi_{p,q},\phi_{p',q'} \rip \to 0$ as $d((p,q),(p',q')) \to \infty$. By invariance of the frame, it enough to check $\lip \phi,\phi_{p,q} \rip \to 0$ as $(p,q) \to \infty$. First, if $p \to \infty$ and $q$ remains bounded, then for each $q$, by the Riemann-Lebesgue Lemma $\lim_{p \to \infty}\lip \phi,\phi_{p,q}\rip = 0$. Therefore since $q$ remains in a compact set, $\lip\phi,\phi_{p,q} \rip \to 0$. Otherwise, we consider $q \to \infty$. Let $\ep>0$. Since $\phi \in L^2(G)$, there exists $R$ such that
  \[ \int_{B(1_G,R)^c} |\phi|^2 \, d\mu \le \ep. \]Then, there exists $M$ such that for $d(q,1) > M$, $B(q^{-1},R) \subset B(1_G,R)^c$. Then,
  \[ |\lip \phi,\phi_{(p,q)} \rip| \le \int_{B(1_G,R)} + \int_{B(1_G,R)^c} |\phi| |\phi(q^{-1}\cdot)| \, d\mu  \le 2\|\phi\| \ep. \]
  Applying Theorem \ref{thm:compact} gives the necessity. 
  To show sufficiency, 
consider the Feichtinger algebra
\[S_0(G):=\{ \phi \in L^2(G) : V_\phi\phi \in L^1(\gg) \}.\]
It is known that $S_0(G)$ is dense in $L^2(G)$, see \cite[Lemma 4.19]{jakobsen18}.
First note that $T_\sigma$ is compact for any $\phi \in S_0(G)$ by Theorem \ref{thm:compact}. Then,
  \[ \lip T_\sigma^{\phi,\psi}f,g \rip = \int \sigma(p,q)\lip f,\phi_{p,q} \rip \lip \psi_{p,q},g \rip \]
  \[\le \|\sigma\|_\infty \|f\|_{L^2(G)} \|g\|_{L^2(G)} \|\psi\|_{L^2(G)} \|\phi\|_{L^2(G)}.\]
  Therefore, we have $\|T_\sigma^{\phi,\psi}\| \le \|\sigma\|_\infty \|\phi\|_{L^2(G)} \|\psi\|_{L^2(G)}$. Then, $T_\sigma^{\phi,\phi}-T_\sigma^{\phi',\phi'} = T_\sigma^{\phi-\phi',\phi} + T_\sigma^{\phi,\phi-\phi'}$.
  This concludes the proof since compact operators are closed in the operator norm topology. 
\end{proof}

Putting this together with Theorem \ref{thm:compact2}, we obtain the following uncertainty principle (see \cite{fernandez10} for the case $G=\bbr^d$ and $\sigma$ an indicator function).

\begin{theorem}\label{thm:uncp-stft}
  Let $G$ be a second countable LCA group containing a subgroup $\Gamma$ such that $\Gamma$ has an accumulation point at $1$ and $\Gamma$ has no nontrivial compact subgroups. Let $\|\phi\|_{L^2(G)}=1$. If $\sigma:\gg \to [0,1]$ is thin, then there exists $c>0$ such that
  \[ \lip T_{1-\sigma}f,f \rip = \int_{\gg} (1-\sigma)|V_\phi f|^2 \, d(\mm) \ge c \|f\|^2 \]
  for all $f \in L^2(G)$.
\end{theorem}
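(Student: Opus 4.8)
\textbf{Proof proposal for Theorem \ref{thm:uncp-stft}.}

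The plan is to recognize Theorem \ref{thm:uncp-stft} as a direct specialization of the abstract result Theorem \ref{thm:compact2} to the STFT setting, and simply to verify that all hypotheses of that theorem are met by the Berezin quantization built from the STFT. Concretely, I would take $X = \gg$ with measure $\mm$, acted on by the Heisenberg group $H(G)$ as described just before Theorem \ref{thm:compact-stft}, and $k_{(p,q)} = \phi_{p,q}$ with $\|\phi\|_{L^2(G)}=1$; the preceding discussion already establishes that $\{\phi_{p,q}\}$ is a normalized continuous Parseval frame, that the group action preserves $|\lip \cdot,\cdot\rip|$, and that $G_\tau$ contains $\{1_{\hat G}\}\times G \times \{1\}\cong G$. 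The identity $\int_\gg (1-\sigma)|V_\phi f|^2\,d(\mm) = \lip T_{1-\sigma}f,f\rip$ is immediate from $\lip f,\phi_{p,q}\rip = V_\phi f(p,q)$ and the definition of the Toeplitz operator.

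The main issue is that Theorem \ref{thm:compact2} requires a weight $w$ satisfying the Schur-type conditions (i) and (ii), and for a \emph{general} $\phi \in L^2(G)$ the kernel $|\lip \phi_{p,q},\phi_{p',q'}\rip|$ need not decay fast enough (or even be integrable) for any such $w$ to exist — this is precisely the point raised in the introduction that relative density is too weak for general windows. So I would not invoke Theorem \ref{thm:compact2} directly with $\phi$ arbitrary. Instead I would first prove the conclusion for $\phi$ in the Feichtinger algebra $S_0(G)$, where $V_\phi\phi \in L^1(\gg)$: there, taking $w \equiv 1$, the group-invariance of $|\lip\phi_{p,q},\phi_{p',q'}\rip|$ reduces (i) to $\|V_\phi\phi\|_{L^1} < \infty$ and (ii) to absolute continuity of the $L^1$ integral (as in Remark \ref{remark:schur}), and conditions on $G_\tau$ are as above. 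So Theorem \ref{thm:compact2} applies and gives $\lip T_{1-\sigma}f,f\rip \ge c\|f\|^2$ for $\phi \in S_0(G)$.

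To pass to general $\phi \in L^2(G)$ with $\|\phi\|=1$, I would use a density/perturbation argument identical in spirit to the sufficiency half of the proof of Theorem \ref{thm:compact-stft}. Since $S_0(G)$ is dense in $L^2(G)$, choose $\phi' \in S_0(G)$ with $\|\phi - \phi'\|$ small; the operator-norm bound $\|T^{\phi,\psi}_\sigma\| \le \|\sigma\|_\infty\|\phi\|\|\psi\|$ established in the proof of Theorem \ref{thm:compact-stft} shows $T^{\phi,\phi}_{1-\sigma}$ is close in operator norm to $T^{\phi',\phi'}_{1-\sigma}$, while the latter is bounded below by $c'>0$ (after renormalizing $\phi'$). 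Hence $T^{\phi,\phi}_{1-\sigma}$ is bounded below by some $c>0$ once $\|\phi-\phi'\|$ is small enough. The one point requiring a little care is that $T_{1-\sigma}$ is $T_1 - T_\sigma = I - T_\sigma$ only when $\|\phi\|=1$ (so that $T_1 = I$), so the renormalization of $\phi'$ must be tracked; this is routine. I expect the density-plus-perturbation step to be the only genuinely non-mechanical part, and even that is essentially a transcription of the argument already used for Theorem \ref{thm:compact-stft}.
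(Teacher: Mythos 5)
Your setup and the treatment of windows in the Feichtinger algebra $S_0(G)$ are correct and match the paper's architecture, but the final perturbation step has a genuine gap. You argue: pick $\phi'\in S_0(G)$ close to $\phi$, get a lower bound $c'>0$ for $T^{\phi',\phi'}_{1-\sigma}$, and conclude that $T^{\phi,\phi}_{1-\sigma}\ge c'-\bigl\|T^{\phi,\phi}_{1-\sigma}-T^{\phi',\phi'}_{1-\sigma}\bigr\|>0$ once $\|\phi-\phi'\|$ is small enough. The problem is that ``small enough'' means small compared to $c'$, and $c'=c'(\phi')$ is produced by a non-quantitative compactness-plus-contradiction argument: you have no control on how $c'(\phi')$ behaves as $\phi'\to\phi$. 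Equivalently, writing $T^{\phi,\phi}_{1-\sigma}=I-T^{\phi,\phi}_\sigma$, your goal is $\|T^{\phi,\phi}_\sigma\|<1$; you know $\|T^{\phi',\phi'}_\sigma\|<1$ for each approximant and that $\|T^{\phi',\phi'}_\sigma\|\to\|T^{\phi,\phi}_\sigma\|$, but a limit of numbers strictly less than $1$ need not be strictly less than $1$. Invertibility (lower boundedness) is an open condition in operator norm, but it does not pass to limits without a uniform constant, and nothing in your argument supplies one.

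The property that \emph{does} transfer under the operator-norm approximation $T^{\phi',\phi'}_\sigma\to T^{\phi,\phi}_\sigma$ is compactness, not the lower bound — and this is exactly how the paper proceeds. Theorem~\ref{thm:compact-stft} uses the $S_0(G)$-approximation to show that $T^{\phi,\phi}_\sigma$ is compact for \emph{every} $\phi\in L^2(G)$ whenever $\sigma$ is thin (and the translated envelope $\rho$ from Lemma~\ref{lemma:2fun} is again thin, hence $T^{\phi,\phi}_\rho$ is again compact). One then reruns the eigenvalue-$1$ contradiction argument from the proof of Theorem~\ref{thm:compact2} directly for the general window $\phi$: the subgroup $\{1_{\hat G}\}\times\Gamma\times\{1\}\le H(G)_\tau$ supplies the linear independence via Corollary~\ref{cor:ind}, and the compactness of $T_\rho$ — now furnished by Theorem~\ref{thm:compact-stft} rather than by the Schur conditions of Theorem~\ref{thm:compact} — yields the contradiction. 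So the fix is to move your density argument one level earlier (to the compactness statement) and then repeat the abstract eigenspace argument verbatim, rather than trying to perturb the conclusion.
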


\begin{proof}
The subgroup $\{1_{\hat G}\} \times \Gamma \times \{1\} \le H(G)$ gives the independence of translations and the tautology that $\sigma$ is thin if and only if $T_\sigma$ is compact concludes the proof using the argument of Theorem \ref{thm:compact2}.
\end{proof}

\begin{cor}
Under the assumptions of Theorem \ref{thm:uncp-stft}, if $E \subset \gg$ is thin, then there exists $c>0$ such that
	\[ \int_{\gg \backslash E} |V_\phi f|^2 \, d\mm \ge c \|f\|^2 \]
for all $f \in L^2(G)$.
\end{cor}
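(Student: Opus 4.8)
The plan is to simply specialize Theorem~\ref{thm:uncp-stft} to the case where $\sigma$ is the indicator function of a thin set $E$. The key observation is that, by the definition of thinness for sets given in Section~\ref{sec:compact}, $E \subset \gg$ is thin precisely when $1_E$ is a thin symbol, and $1_E$ maps into $[0,1]$ (indeed into $\{0,1\}$), so it satisfies the hypotheses required of $\sigma$ in Theorem~\ref{thm:uncp-stft}.

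First I would note that the standing hypotheses on $G$ and $\Gamma$ are inherited verbatim from Theorem~\ref{thm:uncp-stft}, since we are working ``under the assumptions'' of that theorem. Then I would apply Theorem~\ref{thm:uncp-stft} with $\sigma = 1_E$: since $E$ is thin, $1_E : \gg \to [0,1]$ is thin, so there exists $c > 0$ with $\ip{T_{1 - 1_E} f}{f} \ge c\|f\|^2$ for all $f \in L^2(G)$. Finally I would identify $T_{1 - 1_E}$ explicitly: since $1 - 1_E = 1_{\gg \backslash E}$, we have
\[ \ip{T_{1_{\gg \backslash E}} f}{f} = \int_{\gg} 1_{\gg \backslash E}(p,q) \, |\ip{f}{\phi_{p,q}}|^2 \, d(\mm)(p,q) = \int_{\gg \backslash E} |V_\phi f(p,q)|^2 \, d(\mm)(p,q), \]
where the last equality uses the identification $V_\phi f(p,q) = \ip{f}{\phi_{p,q}}$ established in the discussion preceding Theorem~\ref{thm:compact-stft}. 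Combining the two displays gives the claimed inequality.

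There is essentially no obstacle here; the corollary is a direct translation of the preceding theorem into the language of sets rather than symbols. The only point requiring a word of care is the normalization $\|\phi\|_{L^2(G)} = 1$, which is needed so that $\{\phi_{p,q}\}$ is a genuine Parseval frame and the Toeplitz operator $T_{1 - 1_E}$ is exactly the one appearing in Theorem~\ref{thm:uncp-stft}; this is part of the inherited assumptions. One could alternatively absorb a general nonzero $\phi$ by rescaling, adjusting the constant $c$ by $\|\phi\|^2$, but since the statement invokes the assumptions of Theorem~\ref{thm:uncp-stft} this is unnecessary.
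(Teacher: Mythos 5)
Your proposal is correct and is exactly the intended argument: the paper treats this corollary as an immediate specialization of Theorem~\ref{thm:uncp-stft} to $\sigma = 1_E$, using that a set is thin precisely when its indicator is a thin symbol and that $\ip{T_{1_{\gg \backslash E}}f}{f} = \int_{\gg \backslash E}|V_\phi f|^2\,d\mm$. Nothing further is needed.
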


In particular, $\supp V_\phi f$ cannot be thin unless $f=0$.

\begin{remark}
If $\phi$ and $f$ are both supported on compact sets $K_1$ and $K_2$, then $\supp V_\phi f \subset \hat G \times K_2K_1^{-1}$. This shows the thinness condition cannot be relaxed from vanishing at infinity to some smallness at infinity condition.
\end{remark}

\subsection{Wavelet Transform}
Next, we apply these results to wavelets, which are a special case of the affine group acting on $L^2(\bbr^d)$. For $\phi \in L^2(\bbr^d)$, define the following unitary representation of the wavelet group $G = (0,\infty) \times \bbr^d$ on $L^2(\bbr^d)$ by
\begin{equation}\label{eq:action} \phi_{a,b}(x) = a^{-1/2}\phi(a^{-1}x-b) \end{equation}
for $a \in (0,\infty)$ and $b \in \bbr^d$. The group operation is then $(a_1,b_1)(a_2,b_2) = (a_1a_2,a_2^{-1}b_1+b_2)$ and the Haar measure is $d\lambda(a,b) = a^{-1} \, da \, db$.

We say that a wavelet $\phi \in L^2(\bbr^d)$ is admissible if
\begin{equation}\label{eq:adm} \int_0^\infty \dfrac{|\hat \phi(x \xi)|^2}{\xi} \, d\xi =1 \end{equation}
for a.e. $x \in \bbr^d$. Due to the invariance of the measure $a^{-1} \, da$, it is enough to check this for almost every $x \in \mathbb{S}^{d-1}$. We define the set $\cala$ to be the set of all admissible $\phi$.

If $\phi$ is admissible, then $\{\phi_g\}_{g \in G}$ does form a generalized Parseval frame with the measure $\lambda$.
To see this, define the wavelet transform $W_\phi f(a,b) :=\lip f,\phi_{a,b} \rip$. Then, since $\int e^{-ib \xi}W_\phi f(a,b) \, db = a^{-1/2}\hat f(a^{-1}\xi) \hat \phi (\xi)$, by the Plancherel theorem on $\bbr^d$,
\[ \int_{G} |W_\phi f|^2 \, d\lambda = \int_0^\infty \int_{\bbr^d} | \hat f(a^{-1}\xi) \hat\phi(\xi)|^2 a^{-1}\, d\xi a^{-1}\, da \]
\[= \int_{\bbr^d} |\hat f(\eta)|^2 \int_0^\infty |\hat \phi(a \eta)|^2 \, a^{-1} da \, d\eta = \|f\|^2.\]

We want to check the Schur condition (i) and (ii) in Theorem \ref{thm:compact2} so we study the decay properties of $W_\phi \phi$. Define
\[ \BB^1_w = \{ \phi \in L^2(\R^d) : W_\phi\phi \in L^1(w\, d\lambda) \} \]
for a weight $w$. We can show that a very large class of functions is contained in $\BB^1_w$. Define the translation operator $\tau_hf(x) = f(x+h)$. For $0<\alpha \le 1$, denote by $\Lambda_\alpha$ the class of $L^1$ functions such that $\|\tau_h f-f\|_{L^1} \le Ch^\alpha$. $\Lambda_1$ contains the Schwarz functions as well as less smooth functions like indicator functions (thus including the Haar wavelet).
\begin{lemma}
  Let $0 < \ep < \alpha \le 1$ and $w_\ep(a,b) = a^{d/2+\ep}$. Then,
  \[ \Lambda_\alpha \cap L_0^1(|x|^\alpha) \subset \BB^1_{w_\ep} \]
	where $L^1_0(|x|^\alpha) = \{ f \in L^1(\R^d) : \int f =0 \mbox{ and } \int |f(x)| \, |x|^\alpha \, dx < \infty \}$.
\end{lemma}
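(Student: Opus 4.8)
The plan is to show directly that $W_\phi\phi \in L^1(w_\ep\, d\lambda)$ for $\phi \in \Lambda_\alpha \cap L^1_0(|x|^\alpha)$. Since $w_\ep(a,b)\,d\lambda(a,b) = a^{d/2+\ep-1}\,da\,db$, by Tonelli it suffices to bound $\int_0^\infty \big(\int_{\R^d}|W_\phi\phi(a,b)|\,db\big)\,a^{d/2+\ep-1}\,da$. The starting point is the elementary identity obtained from the change of variables $x=ay$ in $W_\phi\phi(a,b)=\lip\phi,\phi_{a,b}\rip=a^{-d/2}\int\phi(x)\overline{\phi(a^{-1}x-b)}\,dx$, namely
\[ W_\phi\phi(a,b) = a^{d/2}\int_{\R^d}\phi(ay)\,\overline{\phi(y-b)}\,dy = a^{d/2}\,\big(\phi(a\,\cdot)*\tilde\phi\big)(b), \qquad \tilde\phi(y):=\overline{\phi(-y)} \]
(using the $L^2(\R^d)$-normalized dilation $\phi_{a,b}(x)=a^{-d/2}\phi(a^{-1}x-b)$, the one making $\{\phi_{a,b}\}$ Parseval). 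A quick check shows $\tilde\phi$ inherits all three hypotheses: $\int\tilde\phi=\overline{\int\phi}=0$; $\|\tau_h\tilde\phi-\tilde\phi\|_{L^1}=\|\tau_{-h}\phi-\phi\|_{L^1}\le C|h|^\alpha$, so $\tilde\phi\in\Lambda_\alpha$; and $\int|\tilde\phi(y)|\,|y|^\alpha\,dy=\int|\phi(y)|\,|y|^\alpha\,dy<\infty$. I would then split the scale integral at $a=1$ and treat the two ranges by completely different mechanisms.

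For \emph{small scales} $0<a\le1$, nothing beyond $\phi\in L^1$ is needed: Young's inequality gives $\|\phi(a\,\cdot)*\tilde\phi\|_{L^1}\le\|\phi(a\,\cdot)\|_{L^1}\|\tilde\phi\|_{L^1}=a^{-d}\|\phi\|_{L^1}^2$, hence $\int_{\R^d}|W_\phi\phi(a,b)|\,db\le a^{-d/2}\|\phi\|_{L^1}^2$, and
\[ \int_0^1 a^{-d/2}\cdot a^{d/2+\ep-1}\,da=\int_0^1 a^{\ep-1}\,da=\tfrac1\ep<\infty, \]
which is exactly where the hypothesis $\ep>0$ enters.

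For \emph{large scales} $a\ge1$, I would exploit cancellation. Since $\int\phi(a\,\cdot)=a^{-d}\int\phi=0$, we may subtract $\tilde\phi(b)\int\phi(ay)\,dy=0$ to get $\big(\phi(a\,\cdot)*\tilde\phi\big)(b)=\int\phi(ay)\,[\tilde\phi(b-y)-\tilde\phi(b)]\,dy$, so by Tonelli and $\tilde\phi\in\Lambda_\alpha$,
\[ \|\phi(a\,\cdot)*\tilde\phi\|_{L^1}\le\int_{\R^d}|\phi(ay)|\,\|\tau_{-y}\tilde\phi-\tilde\phi\|_{L^1}\,dy\le C\int_{\R^d}|\phi(ay)|\,|y|^\alpha\,dy=C\,a^{-d-\alpha}\int_{\R^d}|\phi(u)|\,|u|^\alpha\,du, \]
the last integral being finite because $\phi\in L^1_0(|x|^\alpha)$. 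Thus $\int_{\R^d}|W_\phi\phi(a,b)|\,db\le C\,a^{-d/2-\alpha}$ and
\[ \int_1^\infty a^{-d/2-\alpha}\cdot a^{d/2+\ep-1}\,da=\int_1^\infty a^{\ep-\alpha-1}\,da=\tfrac1{\alpha-\ep}<\infty, \]
which is where $\ep<\alpha$ is used. Adding the two contributions yields $W_\phi\phi\in L^1(w_\ep\,d\lambda)$, i.e. $\phi\in\BB^1_{w_\ep}$.

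I do not expect a genuine obstacle here — the whole argument is the convolution identity plus the observation that $\int\phi=0$ together with the $|x|^\alpha$-moment and the $\Lambda_\alpha$ smoothness conspire to give the coarse-scale decay $\|W_\phi\phi(a,\cdot)\|_{L^1}\lesssim a^{-d/2-\alpha}$ needed to overcome the weight $a^{d/2+\ep}$, while bare integrability suffices at fine scales. The only points demanding care are verifying that $\tilde\phi$ satisfies the same hypotheses as $\phi$, and keeping the powers of $a$ consistent with the $L^2$-normalization of the dilations (with a different normalization the exponents, and hence the admissible range of $\ep$, would shift accordingly). The constraint $0<\ep<\alpha$ is sharp for this argument, being used once at each endpoint of the scale integral.
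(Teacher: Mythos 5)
Your proof is correct and follows essentially the same route as the paper's: split the scale integral at $a=1$, use the crude $L^1$ bound $\|W_\phi\phi(a,\cdot)\|_{L^1}\le a^{-d/2}\|\phi\|_{L^1}^2$ for $a<1$ (where $\ep>0$ is needed), and use the vanishing mean together with the $\Lambda_\alpha$ modulus and the $|x|^\alpha$-moment to get $\|W_\phi\phi(a,\cdot)\|_{L^1}\lesssim a^{-d/2-\alpha}$ for $a>1$ (where $\ep<\alpha$ is needed). Your reformulation via the convolution $a^{d/2}\,\phi(a\,\cdot)*\tilde\phi$ is only a cosmetic repackaging of the paper's direct estimate.
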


In particular, this weight is multiplicative, so by Remark \ref{remark:schur}, $\phi_{a,b}$ satisfies the Schur conditions (i) and (ii) in Theorem \ref{thm:compact2} for any $\phi \in \Lambda_\alpha \cap L^1_0(|x|^\alpha)$.

\begin{proof}
  We split $(0,\infty)=(0,1) \cup (1,\infty)$. On $(0,1)$,
  \[ \int_{\bbr^d} \int_0^1 \left|\int \phi(x) \phi(a^{-1}x-b) \, dx \right|a^{-d/2}  w(a) \, \dfrac{da}{a} \, db  \]
  \[\le \|\phi\|_{L^1}^2 \int_0^1 a^{d/2+\ep}\, \dfrac{da}{a^{d/2+1}} < \infty \]
  On the other hand, using the mean zero property of $\phi$, we estimate
  \[ \int |W_\phi\phi(a,b)| \, db = a^{-d/2}\int \left|\int \phi(x) \left[ \phi(a^{-1}x-b) - \phi(-b) \right] \, dx\right| \, db \]
  \[\le a^{-d/2}\int |\phi(x)| \|\tau_{a^{-1}x}\phi-\phi\|_{L^1} \, dx. \le C a^{-d/2-\alpha} \int |\phi(x)|\cdot |x|^\alpha \, dx\]
  Therefore,
  \[ \int_{\bbr^d} \int_1^\infty |W_\phi\phi| w \, d\lambda \le C\int_1^\infty \dfrac{w(a)}{a^{d/2+1+\alpha}} \, da \|\phi|x|^\alpha\|_{L^1}.\]
  Taking $\ep<\alpha$ ensures that the $a$ integral is finite.
\end{proof}

This lemma gives us plenty of information about the space $\BB^1_w$. It can be verified that the admissibility condition (\ref{eq:adm}) holds for any radial, normalized mean zero function in $L^1 \cap L^1(|x|) \cap L^2$. From this discussion and the previous lemma, we have
\[ L^1 \cap L^1_0(|x|) \cap L^2 \cap (\cup_{0<\alpha \le 1}\Lambda_\alpha) \cap \{ \phi \mbox{ radial }\} \]
\[\subset (\cup_{0<\ep<1}\BB^1_{w_\ep}) \cap \cala=:\cala_1. \]
Therefore, by Theorem \ref{thm:compact}, the following compactness result holds for many wavelets $\phi$. In particular, all Schwarz functions and the Haar function.

\begin{theorem}
  Let $\phi \in \cala_1$ and $\sigma \in L^\infty((0,\infty) \times \R^d)$ be thin. Then,
  \[ T_\sigma f(x) = \int \sigma(a,b) W_\phi f(a,b) \phi_{a,b}(x) \, \dfrac{da}{a} db \]
  is compact.
\end{theorem}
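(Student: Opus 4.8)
The plan is to verify the hypotheses of Theorem~\ref{thm:compact} for the wavelet frame $\{\phi_{a,b}\}$ generated by $\phi \in \cala_1$, and then simply invoke that theorem. Recall that $\cala_1$ was defined precisely so that $\phi \in \cala$ (so $\{\phi_{a,b}\}$ is a continuous Parseval frame with respect to $d\lambda = a^{-1}\,da\,db$) and so that $\phi \in \BB^1_{w_\ep}$ for some $0 < \ep < 1$, i.e.\ $W_\phi\phi \in L^1(w_\ep\,d\lambda)$ with $w_\ep(a,b) = a^{d/2+\ep}$. The underlying homogeneous space here is $X = G = (0,\infty)\times\bbr^d$, the affine (``$ax+b$'') group, acting on itself by left translation; the metric $d$ is any proper left-invariant metric inducing the topology (second countability guarantees one exists), and $\mu = \lambda$. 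The invariance $|\lip \phi_{(a,b)(a',b')},\phi_{(a,b)(a'',b'')}\rip| = |\lip\phi_{a',b'},\phi_{a'',b''}\rip|$ follows since the action \eqref{eq:action} is (a restriction of) a unitary representation of $G$, so in fact $G_\tau = G$ and $(G,X,G,k,\lambda,d)$ is a genuine Berezin quantization.

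The three conditions of Theorem~\ref{thm:compact} are checked as follows. For (iii), that $|\lip \phi_{a,b},\phi_{a',b'}\rip|\to 0$ as $d((a,b),(a',b'))\to\infty$: by the invariance just noted it suffices to show $\lip \phi,\phi_{a,b}\rip \to 0$ as $(a,b)\to\infty$ in $G$, and this is exactly the same argument as in the proof of Theorem~\ref{thm:compact-stft} for the STFT --- one splits into the case where the ``dilation-type'' coordinate stays in a compact set (where a Riemann--Lebesgue/uniform-continuity argument applies) and the case where it escapes (where one uses that $\phi\in L^2$ and most of its mass sits on a fixed ball, so the translate becomes nearly orthogonal to $\phi$). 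For (i) and (ii), I invoke Remark~\ref{remark:schur}: since $w_\ep$ is submultiplicative --- indeed $w_\ep((a,b)(a',b')) = w_\ep(aa',\,\cdot\,) = (aa')^{d/2+\ep} = w_\ep(a,b)\,w_\ep(a',b')$, so it is even multiplicative --- and since $x \mapsto \lip k_1,k_x\rip\,w_\ep(x) = W_\phi\phi(a,b)\,w_\ep(a,b)$ lies in $L^1(X) = L^1(d\lambda)$ by the membership $\phi\in\BB^1_{w_\ep}$, the group-invariance of $|\lip k_x,k_y\rip|$ upgrades this single $L^1$ bound to the uniform Schur estimates (i) and the uniform tail-decay (ii). Then Theorem~\ref{thm:compact} applies verbatim and gives: $\sigma\in L^\infty(X)$ is thin $\iff$ $T_\sigma$ is compact. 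Here $T_\sigma f = \int \sigma(a,b)\lip f,\phi_{a,b}\rip\phi_{a,b}\,\tfrac{da}{a}\,db = \int \sigma(a,b)W_\phi f(a,b)\phi_{a,b}\,\tfrac{da}{a}\,db$, matching the stated operator, so in particular thinness of $\sigma$ implies $T_\sigma$ is compact, which is all that is claimed.

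The only mild subtlety --- and the one place I would be careful --- is the bookkeeping between the multiplicative structure of $w_\ep$ on the affine group (note the group operation $(a_1,b_1)(a_2,b_2) = (a_1a_2,\,a_2^{-1}b_1+b_2)$ mixes the coordinates, but $w_\ep$ depends only on the $a$-coordinate, on which the operation is just multiplication, so submultiplicativity is immediate) and the reduction in Remark~\ref{remark:schur} from an $L^1$-bound on $x\mapsto|\lip k_1,k_x\rip|w(x)$ to the uniform bounds (i), (ii). Concretely, for (i) one writes $w(y)^{-1}\int_X|\lip k_x,k_y\rip|w(x)\,d\mu(x)$, substitutes $x = y z$ using left-invariance of $\mu$ and the identity $|\lip k_{yz},k_y\rip| = |\lip k_z,k_1\rip|$, and bounds $w(yz)\le C\,w(y)w(z)$ to get $\le C\int_X|\lip k_1,k_z\rip|w(z)\,d\mu(z) = C\|W_\phi\phi\cdot w_\ep\|_{L^1} = M$; (ii) is the same computation with the integral restricted to $B(y,R)^c = y\cdot B(1,R)^c$, where the tail $\int_{B(1,R)^c}|\lip k_1,k_z\rip|w(z)\,d\mu(z)\to 0$ as $R\to\infty$ by dominated convergence, uniformly in $y$. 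None of this is hard; it is essentially the observation that $\BB^1_{w_\ep}\cap\cala$ was engineered to be a space of ``analyzing vectors'' in the sense of coorbit theory. So the proof is a short two-line appeal once the hypotheses are in place.

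\begin{proof}
Since $\phi\in\cala_1$ we have $\phi\in\cala$, so $\{\phi_{a,b}\}_{(a,b)\in G}$ is a continuous Parseval frame for $L^2(\bbr^d)$ with respect to $d\lambda(a,b) = a^{-1}\,da\,db$; here $X = G = (0,\infty)\times\bbr^d$ acts on itself by left translation, $\mu = \lambda$, and we fix a proper left-invariant metric $d$ inducing the topology (available by second countability). As \eqref{eq:action} is a unitary representation, $|\lip\phi_{g g'},\phi_{g g''}\rip| = |\lip\phi_{g'},\phi_{g''}\rip|$ for all $g,g',g''\in G$, so $\Ber$ is a Berezin quantization and $G_\tau = G$.

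We verify the hypotheses of Theorem~\ref{thm:compact} with the weight $w = w_\ep$, $w_\ep(a,b) = a^{d/2+\ep}$, for an $\ep\in(0,1)$ with $\phi\in\BB^1_{w_\ep}$ (such $\ep$ exists by definition of $\cala_1$). Note $w_\ep$ is multiplicative: $w_\ep((a_1,b_1)(a_2,b_2)) = w_\ep(a_1a_2,a_2^{-1}b_1+b_2) = (a_1a_2)^{d/2+\ep} = w_\ep(a_1,b_1)\,w_\ep(a_2,b_2)$. Since $(a,b)\mapsto|\lip\phi,\phi_{a,b}\rip|\,w_\ep(a,b) = |W_\phi\phi(a,b)|\,w_\ep(a,b)$ lies in $L^1(d\lambda)$ by $\phi\in\BB^1_{w_\ep}$, Remark~\ref{remark:schur} yields conditions (i) and (ii) of Theorem~\ref{thm:compact}. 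For condition (iii), by invariance of the frame it suffices to show $\lip\phi,\phi_{a,b}\rip\to 0$ as $(a,b)\to\infty$. If $a$ stays in a compact subset of $(0,\infty)$ while $b\to\infty$, then for $\delta>0$ choose $R$ with $\int_{|x|>R}|\phi(x)|^2\,dx<\delta$; for $b$ large, $\{x : |a^{-1}x - b|\le R'\}$ avoids $\{|x|\le R\}$ on the relevant compact $a$-range, whence $|\lip\phi,\phi_{a,b}\rip|\le 2\|\phi\|\,\delta^{1/2}$. If instead $a\to 0$ or $a\to\infty$, then $\phi_{a,b}\to 0$ weakly in $L^2$ (its Fourier transform $\xi\mapsto a^{d/2}e^{-i a b\cdot\xi}\hat\phi(a\xi)$ tends to $0$ pointwise a.e.\ while remaining bounded in $L^2$), so again $\lip\phi,\phi_{a,b}\rip\to 0$. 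Thus (iii) holds.

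By Theorem~\ref{thm:compact}, $\sigma\in L^\infty((0,\infty)\times\bbr^d)$ is thin if and only if $T_\sigma$ is compact, where $T_\sigma f = \int\sigma(a,b)\lip f,\phi_{a,b}\rip\phi_{a,b}\,\tfrac{da}{a}\,db = \int\sigma(a,b)W_\phi f(a,b)\,\phi_{a,b}\,\tfrac{da}{a}\,db$. In particular, if $\sigma$ is thin then $T_\sigma$ is compact.
\end{proof}
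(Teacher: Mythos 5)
Your proof is correct for the stated claim and follows essentially the same route as the paper: identify $X=G$ as the affine group acting on itself, observe that $w_\ep(a,b)=a^{d/2+\ep}$ is multiplicative because the group law multiplies the $a$-coordinates, and use $\phi\in\BB^1_{w_\ep}$ together with Remark~\ref{remark:schur} to obtain the Schur conditions (i) and (ii) of Theorem~\ref{thm:compact}; the paper's proof is exactly this two-line appeal.

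The one place you overreach is your verification of condition (iii) for arbitrary $\phi\in\cala_1$. In the regime $a\to\infty$ you assert that $a^{d/2}\hat\phi(a\xi)\to 0$ pointwise a.e.; for a general $\phi\in\cala_1$ the only available Fourier decay is $|\hat\phi(\xi)|\lesssim|\xi|^{-\alpha}$ with $\alpha\le 1$ (coming from $\phi\in\Lambda_\alpha$), which does not beat the factor $a^{d/2}$ once $d\ge 2$ (or for small $\alpha$ even when $d=1$), so that step is unjustified as written. (A clean fix is to use the group invariance $|\lip\phi,\phi_{a,b}\rip|=|\lip\phi,\phi_{(a,b)^{-1}}\rip|$ with $(a,b)^{-1}=(a^{-1},-ab)$ to reduce $a\to\infty$ to $a\to 0$, where $\|\widehat{\phi_{a,b}}\|_\infty\le a^{d/2}\|\phi\|_{L^1}\to 0$.) This does not damage your proof of the theorem, because the sufficiency direction of Theorem~\ref{thm:compact} (thin $\Rightarrow$ compact) uses only (i) and (ii) --- which is precisely why the paper states the converse only for Schwartz wavelets. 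You should either drop the verification of (iii) and cite only the sufficiency half of Theorem~\ref{thm:compact}, or repair the $a\to\infty$ case as above.
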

The converse also holds if $\phi$ is admissible and Schwartz since $\lip \phi_{a,b},\phi \rip \to 0$ as $d((a,b),(1,0)) \to \infty$, see for example \cite[Appendix, Lemmas 2 and 4]{frazier91}.
This yields the following positivity result, by taking $\Gamma$ in Theorem \ref{thm:compact2} to be the subgroup $ \{1\}\times \R^d$.

\begin{theorem}
  Let $\phi \in \cala_1$ and $\sigma: (0,\infty) \times \R^d \to [0,1]$ be thin. Then there exists $c>0$ such that
  \[ \lip T_{1-\sigma}f,f \rip = \int_{(0,\infty) \times \R^d} (1-\sigma)|W_\phi f|^2 \, d\lambda \ge c \|f\|^2 \]
  for all $f \in L^2(\bbr^d)$.
\end{theorem}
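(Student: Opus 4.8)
The plan is to reduce this statement to the general machinery already built in the paper, specifically Theorem~\ref{thm:compact2} together with the compactness result in Theorem~\ref{thm:compact}. The key observation is that the wavelet setup is a genuine Berezin quantization: we take $\HH = L^2(\bbr^d)$, $X = G = (0,\infty)\times\bbr^d$ the affine group, $\mu = \lambda$ the left Haar measure $a^{-1}\,da\,db$, and $k_{(a,b)} = \phi_{a,b}$ as in~(\ref{eq:action}). The admissibility hypothesis $\phi\in\cala_1\subset\cala$ guarantees, by the Plancherel computation carried out just before the lemma, that $\{\phi_{a,b}\}$ is a normalized continuous Parseval frame. The affine group acts on itself by left translation, preserving both $\lambda$ and a fixed proper left-invariant metric $d$ (here second countability of $G$ is automatic and ensures such a $d$ exists and balls are precompact), and since $\lip\phi_{g\cdot h_1},\phi_{g\cdot h_2}\rip = \lip\phi_{h_1},\phi_{h_2}\rip$ for left translations, we have in fact $G_\tau = G$. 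So $\Ber$ is a Berezin quantization.

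Next I would verify the two hypotheses of Theorem~\ref{thm:compact2}. Conditions (i) and (ii) there are the Schur-type decay conditions on $|\lip k_x,k_y\rip|$ against a weight $w$. Since $\phi\in\cala_1 = (\cup_{0<\ep<1}\BB^1_{w_\ep})\cap\cala$, we have $W_\phi\phi\in L^1(w_\ep\,d\lambda)$ for some $\ep$, i.e. $x\mapsto\lip k_e,k_x\rip w_\ep(x)$ lies in $L^1(X)$. The weight $w_\ep(a,b)=a^{d/2+\ep}$ is submultiplicative (indeed multiplicative), so by Remark~\ref{remark:schur} the group invariance of $|\lip k_x,k_y\rip|$ upgrades the single integrability statement to (i) and (ii) uniformly in $y$. (This is precisely the parenthetical remark made right after the lemma.) Finally, for the subgroup requirement, I would take $\Gamma = \{1\}\times\bbr^d\le G$, which is a closed abelian subgroup, has $1$ as an accumulation point, and has no nontrivial compact subgroups (it is just $\bbr^d$); and $\Gamma\subset G_\tau = G$. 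All hypotheses of Theorem~\ref{thm:compact2} are thus met.

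With these verifications in place, Theorem~\ref{thm:compact2} applied to the thin symbol $\sigma:(0,\infty)\times\bbr^d\to[0,1]$ yields directly a constant $c>0$ with $\lip T_{1-\sigma}f,f\rip\ge c\|f\|^2$ for all $f\in L^2(\bbr^d)$. It remains only to identify the quadratic form: since $\lip T_\tau f,f\rip = \int_X \tau(x)|\lip f,k_x\rip|^2\,d\mu(x)$ for any nonnegative symbol $\tau$, and $\lip f,\phi_{a,b}\rip = W_\phi f(a,b)$, we get $\lip T_{1-\sigma}f,f\rip = \int_{(0,\infty)\times\bbr^d}(1-\sigma)|W_\phi f|^2\,d\lambda$, which is the stated inequality.

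I do not expect a serious obstacle here; the content of the theorem has been front-loaded into the general framework and the two preceding lemmas. The only point requiring a little care is the passage from $W_\phi\phi\in L^1(w_\ep\,d\lambda)$ to the uniform Schur conditions (i) and (ii): one must check that $w_\ep$ being multiplicative really does let the group invariance $|\lip k_{gx},k_{gy}\rip| = |\lip k_x,k_y\rip|$ move the base point, and that condition (ii) — the uniform smallness of the tail integrals — follows from the single $L^1(w_\ep\,d\lambda)$ membership by absolute continuity of the integral together with left-invariance. This is routine and is exactly the mechanism behind Remark~\ref{remark:schur}; I would simply cite that remark. One should also note in passing that $\cala_1\subset\cala$ so admissibility (hence the Parseval frame property) genuinely holds for the wavelets covered by the statement, which was established in the discussion preceding the compactness theorem.
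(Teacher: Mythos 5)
Your proposal is correct and follows the same route as the paper, which simply invokes Theorem~\ref{thm:compact2} with $\Gamma=\{1\}\times\bbr^d$ after the preceding lemma and Remark~\ref{remark:schur} have supplied the Schur conditions for $\phi\in\cala_1$. Your additional verifications (that the affine action gives $G_\tau=G$, that admissibility yields the Parseval frame property, and that $\{1\}\times\bbr^d$ is abelian with no nontrivial compact subgroups and accumulates at the identity) are exactly the checks the paper leaves implicit.
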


As an immediate consequence, we obtain the following uncertainty principle.

\begin{cor}
Let $\phi \in \cala_1$. If $E \subset \R^d \times (0,\infty)$ is thin, then there exists $c>0$ such that
	\[ \int_{ (0,\infty) \times \R^d \backslash E} |W_\phi f(a,b)|^2 \dfrac{da \, db}{a} \ge c \|f\|^2 \]
for all $f \in L^2(\R^d)$.
\end{cor}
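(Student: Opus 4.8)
The plan is to deduce this corollary directly from the preceding theorem on the positivity of $T_{1-\sigma}$ by specializing to an indicator symbol. First I would set $\sigma = 1_E$ where $E \subset (0,\infty) \times \R^d$ is thin; since $E$ is thin by hypothesis, its indicator function $1_E$ is thin by definition, and clearly $1_E$ takes values in $[0,1]$. Thus the preceding theorem applies verbatim and produces a constant $c>0$ with $\lip T_{1-1_E}f,f\rip \ge c\|f\|^2$ for all $f \in L^2(\R^d)$.

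The second and only remaining step is to identify $\lip T_{1-1_E}f,f\rip$ with the displayed integral. Using the formula $\lip T_\rho f,f\rip = \int \rho(a,b)|\lip f,\phi_{a,b}\rip|^2\, d\lambda(a,b)$ established in Section~\ref{sec:thin} together with $\lip f,\phi_{a,b}\rip = W_\phi f(a,b)$ and $d\lambda(a,b) = a^{-1}\,da\,db$, one has
\[
\lip T_{1-1_E}f,f\rip = \int_{(0,\infty)\times\R^d} (1-1_E(a,b))\,|W_\phi f(a,b)|^2\,\frac{da\,db}{a} = \int_{\bigl((0,\infty)\times\R^d\bigr)\setminus E} |W_\phi f(a,b)|^2\,\frac{da\,db}{a},
\]
since $1 - 1_E = 1_{E^c}$. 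Combining this identity with the lower bound from the previous step yields the claimed inequality.

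There is essentially no obstacle here: this is a routine specialization, and all the work has already been done in establishing Theorem~\ref{thm:compact2} (the abstract positivity result), its wavelet incarnation in the previous theorem of this subsection, and the verification that admissible $\phi \in \cala_1$ satisfies the Schur conditions (i), (ii). The one point that merits a sentence of care is checking that the wavelet group $G = (0,\infty)\times\R^d$ satisfies the hypothesis on $G_\tau$: here $\lip \phi_{g x},\phi_{g y}\rip = \lip \phi_x,\phi_y\rip$ holds for all $g \in G$ (the representation is genuinely unitary), so $G_\tau = G$, and the subgroup $\Gamma = \{1\}\times\R^d \cong \R^d$ is Abelian, has $1$ as an accumulation point, and has no nontrivial compact subgroups. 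This is exactly the choice of $\Gamma$ indicated before the statement, so the hypotheses of the positivity theorem are met and the corollary follows.
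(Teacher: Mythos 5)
Your proposal is correct and is exactly the argument the paper intends: the corollary is stated as an immediate consequence of the preceding theorem, obtained by taking $\sigma = 1_E$ (which is thin with values in $[0,1]$ by hypothesis) and observing that $\lip T_{1-1_E}f,f\rip$ equals the integral of $|W_\phi f|^2$ over the complement of $E$ with respect to $d\lambda = a^{-1}\,da\,db$. Your added check that $G_\tau = G$ and that $\Gamma = \{1\}\times\R^d$ satisfies the required hypotheses matches the paper's own remark preceding the theorem.
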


As in the STFT case, this implies $\supp W_\phi f$ can only be thin if $f=0$, and this is sharp in the sense that we cannot improve from vanishing sets to small ones for general $\phi$. If $f$ and $\phi$ are both supported in a ball $B$, then $W_\phi f$ is supported the region $\{ (a,b) : b \in a^{-1}B-B \}$ which contains the strip $(0,\infty) \times B$.

We also mention that these results continue to hold for higher dimensional wavelet transforms such as the shearlet \cite{kutyniok09}, but describing the classes $\BB^1_w$ and $\cala$ is more difficult. However, we mention that Schwarz functions with Fourier support in a bounded set away from the $y$-axis are included in $\BB^1$ without any weight as shown in \cite{kutyniok09}.

\subsection*{Acknowledgements} The authors would like to thank Karlheinz Gr\"ochenig for his comments.

\end{document}